\documentclass{birkjour}
\usepackage{graphicx}
\usepackage{graphics}
\usepackage{amssymb, amsmath, amsfonts}

\textwidth12cm
\textheight19.5cm
\topmargin-1cm
%
 \newtheorem{theorem}{Theorem}[section]
 \newtheorem{corollary}[theorem]{Corollary}
 \newtheorem{lemma}[theorem]{Lemma}
 \newtheorem{proposition}[theorem]{Proposition}
 \theoremstyle{definition}
 \newtheorem{definition}[theorem]{Definition}
 \theoremstyle{remark}
 \newtheorem{remark}[theorem]{Remark}
 \newtheorem{example}[theorem]{Example}
 \numberwithin{equation}{section}

\newcommand{\R}{{\mathbb R}}
\newcommand{\oR}{{\overline{\mathbb R}}}
\newcommand{\N}{{\mathbb N}}

\newcommand{\To}{\longrightarrow}

\def\1{\^{\i}}
\def\2{\u{a}}
\def\3{\c{s}}
\def\4{\^{a}}
\def\5{\c{t}}
\def\a{\alpha}

\def\e{\epsilon}

\def\l{\lambda}
\def\<{\langle}
\def\>{\rangle}

\DeclareMathOperator*\dom{dom}
\DeclareMathOperator*\epi{epi}
\DeclareMathOperator*\cl{cl}
\DeclareMathOperator*\co{co}

\DeclareMathOperator*\pr{pr}

\DeclareMathOperator*\lin{lin}

\DeclareMathOperator*\inte{int}

\begin{document}
\title[Minimax Results on Dense Sets]
 {Minimax Results on Dense Sets and Dense Families of Functionals in  $C(K)$ and $B(K)$} 

\author{Szil\'ard L\' aszl\' o}

\address{Department of Mathematics\\ Technical University of Cluj-Napoca\\
              Str. Memorandumului nr. 28, 400114 Cluj-Napoca, Romania.\\Faculty of Mathematics and Computer Sciences\\ Babe\c s-Bolyai University,
400084 M. Kog\u alniceanu 1, Cluj-Napoca
Romania}
              \email{laszlosziszi@yahoo.com}

\thanks{This work was supported by a grant of the Romanian Ministry of Education, CNCS - UEFISCDI, project number PN-II-RU-PD-2012-3 -0166 and a grant from the Romanian National Authority for Scientific Research,
CNCS-UEFISCDI, project number PN-II-ID-PCE-2011-3-0024.}

\subjclass{47H04, 47H05, 26B25, 26E25, 90C33}

\keywords{self-segment-dense set, segment-dense set, convex function, minimax theorem}

\begin{abstract}  Let $X$ and $Y$ be two arbitrary sets and let $f:X\times Y\To\R$ be a bifunction.  Recall that a minimax theorem deals with sufficient conditions under which the equality $\inf\nolimits_{x\in X}\sup\nolimits_{y\in Y} f(x, y) =\sup\nolimits_{y\in Y}\inf\nolimits_{x\in X} f(x, y)$
holds. Generally the set $X$ is assumed to be compact, ​minimax results on dense sets, (that is $X$ is  dense in a subset of a topological vector space), are absent in the literature. In this paper we give a motivation of this absence, we show by  an example that the general results of Fan and Sion cannot be extended on usual dense sets. Nevertheless,  we obtain some new minimax results on  special type of dense sets.
 We apply our results in order to obtain denseness of some family of functionals in the function spaces $C(K)$ and $B(K),$ respectively.
\end{abstract}
\maketitle
\section{Introduction}

 In this paper we obtain some minimax results on dense sets. We recall, that among the most general minimax results are the ones obtained by Fan \cite{Fan} and Sion \cite{Si}, and both assume the compactness of $X.$  As a matter of ​fact, ​minimax results on dense sets  are absent in the literature. In this paper we give a motivation of this absence, Example \ref{extm} shows that the general results of Fan and Sion cannot be extended on usual dense sets. Nevertheless,  we obtain some new minimax results on a special type of dense set that we call self-segment-dense \cite{La,LaVi,LaVi1}. Moreover, under the strong  assumption of equicontinuity of the family $\{f(\cdot,y)\}_{y\in Y}$ we are able to obtain some minimax results on general dense sets. Our approach is based on some results that ensures that the infimum of a convex function on a dense set coincides with its global infimum.  In first step we provide conditions that assure the infimum of a convex function on a dense set of its domain coincide with the infimum on the convex hull of that dense set. As a second step we give conditions that assure the infimum on this convex hull is equal to the global infimum of the function. In the same manner, we provide some conditions that ensure the coincidence of two convex function that are equal on a dense subset of their domain. Then we apply these results in order to obtain some minimax results on dense sets.
 Several examples and counterexamples circumscribe our research and motivates our approach considering special type of dense sets.

 In what follows, for the convenience of the reader, we recall Fan's  minimax result (see \cite{Bor,Fan}).

\begin{theorem}\label{fan} Suppose that $X$ and $Y$ are non-empty sets and  let $f:X\times Y\to\R$ be a function  convexlike on $X$ and concavelike  on $Y$. Suppose that $X$ is compact and $x\To f(x,y)$ is lower semicontinuous on $X$ for each $y\in Y.$ Then,
$$\min\nolimits_{x\in X}\sup\nolimits_{y\in Y} f(x, y) =\sup\nolimits_{y\in Y}\min\nolimits_{x\in X} f(x, y).$$
\end{theorem}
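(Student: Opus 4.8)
The plan is to reduce the statement to a finite subfamily of $Y$ by compactness, and then to settle the finite case by a separation argument in a Euclidean space. Throughout, write $a:=\inf_{x\in X}\sup_{y\in Y}f(x,y)$ and $b:=\sup_{y\in Y}\inf_{x\in X}f(x,y)$. The inequality $b\le a$ is immediate (for any $x_0\in X$, $y_0\in Y$ one has $\inf_x f(x,y_0)\le f(x_0,y_0)\le\sup_y f(x_0,y)$). Moreover, since $x\mapsto\sup_{y}f(x,y)$ is lower semicontinuous on $X$, being a supremum of lower semicontinuous functions, its infimum $a$ is attained on the compact set $X$; likewise each $\inf_x f(x,y)$ is a minimum. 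Hence it suffices to prove $a\le b$, and then the displayed equality holds with the stated extrema.

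Assume, for contradiction, that $a>b$ and fix a real number $t$ with $b<t<a$ (any finite $t>b$ if $a=+\infty$). For $y\in Y$ put $F_y:=\{x\in X:\ f(x,y)\le t\}$; each $F_y$ is closed by lower semicontinuity, hence compact. If some $x_0$ belonged to every $F_y$, we would get $\sup_y f(x_0,y)\le t<a$, contradicting the definition of $a$; thus $\bigcap_{y\in Y}F_y=\varnothing$. By compactness of $X$ (finite intersection property) there are $y_1,\dots,y_n\in Y$ with $\bigcap_{i=1}^{n}F_{y_i}=\varnothing$, that is, $\max_{1\le i\le n}f(x,y_i)>t$ for every $x\in X$.

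Now consider $C:=\{u\in\R^{n}:\ \exists\,x\in X,\ f(x,y_i)\le u_i\ \text{for }i=1,\dots,n\}$. Using that $f$ is convexlike on $X$, one checks directly that $C$ is convex, and clearly $C+\R^{n}_{+}\subseteq C$, where $\R^n_+$ is the nonnegative orthant; in particular $C$ has nonempty interior. The previous paragraph says precisely that $t\mathbf 1=(t,\dots,t)\notin C$. Separating $t\mathbf 1$ from the convex set $C$ yields $\mu\in\R^{n}\setminus\{0\}$ with $\<\mu,u\>\ge t\sum_{i}\mu_i$ for all $u\in C$; since $C$ is upward closed this forces $\mu\ge0$, and after normalizing we may assume $\sum_i\mu_i=1$. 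Evaluating at $u=(f(x,y_1),\dots,f(x,y_n))\in C$ gives $\sum_{i=1}^{n}\mu_i f(x,y_i)\ge t$ for every $x\in X$.

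Finally, iterating the concavelike property of $f$ on $Y$ over the points $y_1,\dots,y_n$ (a routine induction from the two-point case, reducing a convex combination of $n$ points to one of $n-1$ points) produces $y^{*}\in Y$ with $f(x,y^{*})\ge\sum_{i=1}^{n}\mu_i f(x,y_i)$ for all $x\in X$. Combining the last two inequalities, $f(x,y^{*})\ge t$ for every $x$, so $\inf_{x}f(x,y^{*})\ge t$ and hence $b\ge t$, contradicting $t>b$. Therefore $a\le b$, so $a=b$, and the outer infimum is attained by the first paragraph. The step I expect to be the crux is the separation argument of the third paragraph: it is where convexlikeness enters essentially and where the ``dual'' multipliers $\mu_i$ are produced; by contrast, the passage from a finite subfamily back to all of $Y$ is a soft compactness argument, and the $n$-fold use of concavelikeness is only bookkeeping.
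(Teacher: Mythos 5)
The paper states Theorem~\ref{fan} without proof, merely recalling it from \cite{Fan} and \cite{Bor}, so there is no in-paper argument to compare yours against. Your proof is correct and is essentially the classical separation-based proof of Fan's theorem: the reduction to a finite subfamily $\{y_1,\dots,y_n\}$ via lower semicontinuity of the sublevel sets $F_y$ and compactness of $X$ is sound; the set $C$ is convex precisely because of convexlikeness (the witness $x_3$ for two points of $C$ certifies their convex combination lies in $C$); upward closedness gives $\mu\ge 0$ and nonempty interior, so the separation of $t\mathbf 1$ from $C$ is legitimate; and the $n$-fold concavelike step is the routine induction you describe. The edge cases ($a=+\infty$, attainment of the two minima via lower semicontinuity on the compact $X$) are handled. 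One cosmetic remark: when you invoke separation you should say explicitly that $t\mathbf 1\notin\operatorname{int}C$ (immediate since $t\mathbf 1\notin C$), which is the hypothesis the supporting-hyperplane form of the theorem actually needs; as written the step is correct but slightly elliptic.
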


Sion's minimax results holds under different assumptions (see \cite{Si}).
\begin{theorem}\label{sion} Let $X$ be a compact and convex subset of  topological vector space and let $Y$ be a convex subset of a topological vector space. Let $f:X\times Y\to\R$ be a function  and assume that $x\To f(x,y)$ is lower semicontinuous and quasi-convex on $X$ for each $y\in Y,$ and $y\To f(x,y)$ is upper semicontinuous and quasi-concave on $Y$ for each $x\in X.$ Then,
$$\min\nolimits_{x\in X}\sup\nolimits_{y\in Y} f(x, y) =\sup\nolimits_{y\in Y}\min\nolimits_{x\in X} f(x, y).$$
\end{theorem}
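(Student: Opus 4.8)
The plan is to prove only the nontrivial inequality $\min_{x\in X}\sup_{y\in Y}f(x,y)\le\sup_{y\in Y}\min_{x\in X}f(x,y)=:c$, since the reverse inequality is automatic and all the indicated extrema are attained: $x\mapsto\sup_{y\in Y}f(x,y)$ is lower semicontinuous as a supremum of lower semicontinuous functions on the compact $X$, and each $f(\cdot,y)$ is lower semicontinuous on the compact $X$. Arguing by contradiction, suppose $c<\min_{x\in X}\sup_{y\in Y}f(x,y)$ and fix $\lambda$ strictly between these numbers. For each $x\in X$ there is $y\in Y$ with $f(x,y)>\lambda$, so the sets $U_y:=\{x\in X:f(x,y)>\lambda\}$, open by lower semicontinuity of $f(\cdot,y)$, cover $X$; by compactness finitely many of them, say $U_{y_1},\dots,U_{y_n}$, already cover $X$. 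Then $\max_{1\le i\le n}f(x,y_i)>\lambda$ for every $x$, and since $x\mapsto\max_i f(x,y_i)$ is lower semicontinuous on the compact $X$ we get $\min_{x\in X}\max_{1\le i\le n}f(x,y_i)>\lambda$. Everything now reduces to the following claim: for every compact convex subset $X'$ of the topological vector space in which $X$ lies and all $y_1,\dots,y_n\in Y$, one has $\min_{x\in X'}\max_{1\le i\le n}f(x,y_i)\le\sup_{z\in\co\{y_1,\dots,y_n\}}\min_{x\in X'}f(x,z)$. Indeed, applying it with $X'=X$ and using $\co\{y_1,\dots,y_n\}\subseteq Y$ (here convexity of $Y$ enters) gives $\lambda<\min_{x\in X}\max_i f(x,y_i)\le c$, contradicting $\lambda>c$.

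I would establish the claim by induction on $n$, the case $n=1$ being trivial. The hard part is the case $n=2$, which I would prove by contradiction. Given $p,q\in Y$ and $\alpha$ strictly between $\sup_{w\in[p,q]}\min_{x\in X'}f(x,w)$ and $\min_{x\in X'}\max(f(x,p),f(x,q))$, each segment point $w\in[p,q]\subseteq Y$ satisfies $\min_{x\in X'}f(x,w)<\alpha$, so $\{x\in X':f(x,w)<\alpha\}$ is nonempty and the closed convex set $\bar C_w:=\{x\in X':f(x,w)\le\alpha\}$ is a nonempty compact connected set. Quasi-concavity of $f(x,\cdot)$ gives $\bar C_w\subseteq\bar C_p\cup\bar C_q$ for all $w\in[p,q]$, whereas the choice of $\alpha$ forces $\bar C_p\cap\bar C_q=\emptyset$. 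Connectedness then places each $\bar C_w$ entirely inside $\bar C_p$ or entirely inside $\bar C_q$, so the sets $D_p:=\{t\in[0,1]:\bar C_{(1-t)p+tq}\subseteq\bar C_p\}$ and $D_q:=\{t\in[0,1]:\bar C_{(1-t)p+tq}\subseteq\bar C_q\}$ are disjoint, nonempty and cover $[0,1]$; showing that they are \emph{closed} — which is where upper semicontinuity of $f(x,\cdot)$ enters, through upper semicontinuity of $t\mapsto f(x,(1-t)p+tq)$, allowing one to pass to the limit — contradicts the connectedness of $[0,1]$. Hence $\min_{x\in X'}\max(f(x,p),f(x,q))\le\sup_{w\in[p,q]}\min_{x\in X'}f(x,w)$.

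For the inductive step with $n\ge3$, put $c_n:=\sup_{z\in\co\{y_1,\dots,y_n\}}\min_{x\in X'}f(x,z)$, suppose $\min_{x\in X'}\max_{1\le i\le n}f(x,y_i)>c_n$, and fix $\alpha$ in between. The set $A:=\{x\in X':f(x,y_n)\le\alpha\}$ is compact and convex, and nonempty since $\min_{x\in X'}f(x,y_n)\le c_n<\alpha$. Applying the case $n=2$ to the pairs $(z,y_n)$ for $z\in\co\{y_1,\dots,y_{n-1}\}$ shows that each such $z$ admits a point of $A$ at which $f(\cdot,z)<\alpha$, so $\sup_{z\in\co\{y_1,\dots,y_{n-1}\}}\min_{x\in A}f(x,z)\le\alpha$; meanwhile every $x\in A$ satisfies $f(x,y_n)\le\alpha<\max_{1\le i\le n}f(x,y_i)$, hence $\max_{1\le i\le n-1}f(x,y_i)>\alpha$, so $\min_{x\in A}\max_{1\le i\le n-1}f(x,y_i)>\alpha$. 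This contradicts the induction hypothesis applied to $A$ and $y_1,\dots,y_{n-1}$, which completes the proof of the claim and hence of the theorem.

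The main obstacle is the case $n=2$ of the claim, and within it the proof that $D_p$ and $D_q$ are closed subsets of $[0,1]$: this is the single point where the two semicontinuity assumptions are used together, and it is precisely what lets one bypass any separation argument for disjoint convex sets — which would require a local convexity hypothesis on the ambient spaces — in favour of the mere connectedness of a segment.
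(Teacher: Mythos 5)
The paper does not prove this statement: Theorem \ref{sion} is recalled from Sion's article \cite{Si} and used as a black box, so there is no in-paper argument to compare yours against. What you have written is, in substance, Komiya's elementary proof of Sion's theorem: reduce to finitely many $y_i$ by compactness and lower semicontinuity, then prove by induction on $n$ that $\min_{x\in X'}\max_i f(x,y_i)\le\sup_{z\in\co\{y_1,\dots,y_n\}}\min_{x\in X'}f(x,z)$ for every compact convex $X'$, the case $n=2$ resting on a connectedness argument along the segment $[p,q]$. The global structure and the inductive step are correct, and quantifying the claim over all compact convex $X'$ is exactly what makes the induction work. The one place where the sketch is thinner than it should be is the closedness of $D_p$ and $D_q$: for an arbitrary $x\in\bar C_{w_t}$, upper semicontinuity only yields $\limsup_n f(x,w_{t_n})\le f(x,w_t)\le\alpha$, which does not put $x$ into $\bar C_{w_{t_n}}$ when $f(x,w_t)=\alpha$ exactly, so ``passing to the limit'' on a generic point fails. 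The fix is already contained in your setup: take the witness $x_0$ with $f(x_0,w_t)<\alpha$ (it exists because $\alpha$ strictly dominates $\sup_{w}\min_{x}f(x,w)$); if $t_n\to t$ with $t_n\in D_p$, then $f(x_0,w_{t_n})<\alpha$ for large $n$ by upper semicontinuity, so $x_0\in\bar C_{w_{t_n}}\subseteq\bar C_p$, whereas $t\in D_q$ would force $x_0\in\bar C_{w_t}\subseteq\bar C_q$, contradicting $\bar C_p\cap\bar C_q=\emptyset$; hence $t\in D_p$ and $D_p$ is closed. (Komiya's original argument sidesteps this by working with two levels $\alpha<\beta$.) With that point made explicit the proof is complete, and it correctly locates where each hypothesis --- compactness, the two semicontinuities, the two quasi-convexity assumptions, and the convexity of $Y$ --- enters, without ever invoking separation of convex sets or local convexity of the ambient spaces.
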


For a comprehensive survey in the field of minimax theorems we refer to \cite{S,S1}.

In this paper we obtain some Fan type minimax result, where the set $X$ is a (special) dense subset of a convex set. We also show that our result fails in general, it is not enough to  assume  only that $X$ is a dense subset of a compact and convex set.

The paper is organized as follows. In next section, we introduce some preliminary notions and the necessary apparatus that we need in order to obtain our results. We also present the notion of a self-segment-dense set introduced in \cite{LaVi} and  the notion of a segment-dense set introduced in \cite{DTL}. We show that these notions are incomparable, we give an example of a self-segment-dense set which is not segment-dense, and an example of a segment-dense set which is not self-segment-dense. Further an example of a dense set is given, which is neither self-segment-dense nor segment-dense.

Sections 3 deals with convex functions. We provide conditions that assure the infimum of a convex function on a dense subset of its domain is equal to the global infimum of that function. In Section 4 we apply these results in order to obtain some minimax results on dense sets. Also here, by an example  we show that the extension of Fan's and Sion's minimax result to usual dense sets is impossible. In the final section we apply our results in order to obtain denseness of some family of functionals in the  function spaces $C(K)$ and $B(K),$ respectively.
\section{Preliminaries}

\subsection{Convexity notions for real valued functions}

Let $X$ be a real Hausdorff, locally convex topological vector space.  For a non-empty set $D\subseteq X$, we denote by $\inte(D)$ its interior, by $\cl(D)$ its closure, by $\co(D)$ its convex hull, by $\overline{\co}( D)=\cl(\co( D))$ its closed convex hull and by $\lin (D)$ the subspace of $X$ generated by $D$.  We say that $P\subseteq D$ is dense in $D$ iff $D\subseteq \cl (P)$, and that $P\subseteq X$ is relatively compact iff $\cl(P)$ is compact.

We say that the function $f:X\To \oR=\R\cup\{\pm\infty\}$ is convex if
\begin{equation*}
\forall x,y\in X, \ \forall t\in[0,1] \ :f(t x+(1-t)y)\leq t f(x)+(1-t)f(y),
\end{equation*}
 with the conventions $(+\infty)+(-\infty)=+\infty,$ $0\cdot(+\infty)=+\infty$ and $0\cdot(-\infty)=0$. We consider $\dom f=\{x\in X:f(x)<+\infty\}$ the \emph{domain} of $f$ and $\epi f=\{(x,r)\in X\times\R:f(x)\leq r\}$ its \emph{epigraph}.  We call $f$ \emph{proper} if $\dom f\neq\emptyset$ and $f(x)>-\infty$ for all $x\in X$. By $\overline{f}$ we denote the \emph{lower semicontinuous hull} of $f$, namely the function whose epigraph is the closure of
$\epi f$ in $X\times\R$, that is $\epi(\overline{f})=\cl(\epi f).$

We say that the set $U\subseteq \dom f$ is graphically dense in $\dom f$, (see \cite{BD,P}), if for all $x\in\dom f$ there exists a net $(u_i)\subseteq U$ such that $u_i\To x$ and $f(u_i)\To f(x).$

Let $f:X\to\oR$ be a proper function and consider the set  $U\subseteq \dom f$.
We define the set $\epi f_U$ as $$\epi f_U:=\{(u,r)\in U\times \R:f(u)\le r\},$$
and the function $\overline{f_U}:X\To\oR$ by
$$\epi\overline{f_U}=\cl(\epi f_U).$$

We say that $f$ is convex on $U$, iff for all $u,v\in U,\,t\in[0,1]$ such that $(1-t)u+tv\in U$ one has
$$f((1-t)u+tv)\le(1-t)f(u)+tf(v).$$
Note that we did not  assume the convexity of  $U$ in the previous definition.

We say that $f$ is lower semicontinuous at $x\in X$ if for every net $(x^i)\subseteq X$ converging to $x$ one has $\liminf_{x^i\To x}f(x^i)\ge f(x).$
$f$ is upper semicontinuous at $x$ iff $-f$ is lower semicontinuous at $x.$ $f$ is lower semicontinuous on $X$ if is lower semicontinuous at every point  of $X.$ Note that $f$ is lower semicontinuous on $X$ iff $\epi f$ is closed, that is $f=\overline{f}.$

We say that $f$ is quasiconvex, if its domain is convex and for all $x,y\in \dom f$ and $t\in[0,1]$ one has $f((1-t)x+ty)\le\max\{f(x),f(y)\}.$ $f$ is quasiconcave iff $-f$ is quasiconvex.

For two arbitrary sets $X$ and $Y$, and a bifunction $f:X\times Y\To\R$, Ky Fan \cite{Fan} introduced the following notions.

$f$ is convexlike on $X$, iff for all $x_1,x_2\in X,\,t\in[0,1]$ there exists $x_3\in X$ such that
$$f(x_3,y)\le(1-t)f(x_1,y)+tf(x_2,y)\,\forall y\in Y.$$
$f$ is concavelike on $Y$ iff for all $y_1,y_2\in Y,\,t\in[0,1]$ there exists $y_3\in Y$ such that
$$f(x,y_3)\ge(1-t)f(x,y_1)+tf(x,y_2)\,\forall x\in X.$$
Note that in these definitions no algebraic structure on $X$ and $Y$ are assumed. Obviously, if the mapping $x\To f(x,y)$ is convex for every $y\in Y$ then $f$ is convexlike in its first variable.

\subsection{On some remarkable properties of self-segment-dense sets}
Let $X$ be a real Hausdorff, locally convex topological vector space.  We will  use the following notations for the open, respectively
closed, line segments in $X$ with the endpoints $x$ and $y$
\begin{eqnarray*}
]x,y[ &:=&\big\{z\in X:z=x+t(y-x),\,t\in ]0,1[\big\}, \\
\lbrack x,y] &:=&\big\{z\in X:z=x+t(y-x),\,t\in \lbrack 0,1]\big\}.
\end{eqnarray*}
The line segments $]x,y],$ respectively $[x,y[$ are defined similarly.
In \cite{DTL}, Definition 3.4, The Luc has introduced the notion of a so-called \emph{segment-dense} set. Let $V\subseteq X$ be a convex set. One
says that the set $U\subseteq V$ is segment-dense in $V$ if for each $x\in V$ there can be found $y\in U$ such that $x$ is a cluster point of the set $%
[x,y]\cap U.$

In what follows we present a denseness notion (cf. \cite{LaVi,LaVi1}) which is slightly different from the concept of The Luc presented above, but which is in some sense compatible with the convexity property of sets.

\begin{definition}
\label{dd} Consider the sets $U\subseteq V\subseteq X$ and assume that $V$
is convex.
We say that $U$ is self-segment-dense in $V$ if $U$ is dense in $V$ and
\begin{equation*}
\forall x,y\in U,\mbox{  the set }\lbrack x,y]\cap U\mbox{  is dense in }%
\lbrack x,y].
\end{equation*}
\end{definition}

\begin{remark}\rm\textrm{ Obviously in one dimension the concepts of a segment-dense set respectively a self-segment-dense set are equivalent to the concept of a dense set. }
\textrm{\ }
\end{remark}

In what follows we provide an essential example of a self-segment-dense set.

\begin{example}\rm [see also \cite{LaVi1}, Example 2.1] Let $V$ be the real Hilbert space of square summable sequences $l_2$ and define $U$ to be the set
\begin{equation*}
U :=\{(x)=(x_1,...,x_n,...) \in l_2 : x_i\in \mathbb{Q},\mbox{ for all } i\in \N\},
\end{equation*}
where $\mathbb{Q}$ denotes the set of all rational numbers. Then, it is
clear that $U$ is dense in $l_2.$ On the other hand $U$ is not
segment-dense in $l_2,$ since for $(x)=\left(\sqrt{2},\frac12,...,\frac1n,...\right)\in {l_2} $ and for every $(y)=(y_1,y_2,...,y_n,...)\in U$, one has $[(x), (y)] \cap U = \{(y)\}.$

It can easily be observed that $U$ is self-segment-dense in $l_2$, since for every $(x),(y)\in U,$ $(x)=(x_1,...,x_n,...),\,(y)=(y_1,...,y_n,...)$ we have $[(x),(y)]\cap U=\{(x_1+t(y_1-x_1),...,x_n+t(y_n-x_n),...): t\in[0,1]\cap\mathbb{Q}\},$ which is
obviously dense in $[(x),(y)].$
\end{example}

Another very interesting example comes from the general infinite dimensional setting.  Let $X$ be a nonreflexive Banach space and let $B$ be the closed unit ball of $X$. Let $X^{**}$ be the bidual space of $X$ and let $B^{**}$ be its closed unit ball. According to Goldstine Theorem \cite{fabian} in this case $B$ is dense in $B^{**}$ in the weak$^*$ topology of $X^{**}.$ Moreover, in virtue of convexity of $B$, we have that $B$ is actually self-segment-dense in $B^{**}$ with respect to the weak$^*$ topology of $X^{**}.$

To further circumscribe the notion of a self-segment-dense set we provide an example %
of a subset that is dense but not self-segment-dense.

\begin{example}\rm\label{ex1}
Let $X$ be an infinite dimensional real Hilbert space. It is known that the unit sphere %
$U=\left\{ x\in X:\left\Vert x\right\Vert =1\right\},$
is dense with respect to the weak topology in the unit ball $B=\left\{ x\in X:\left\Vert x\right\Vert \leq1 \right\}$, but %
it is obviously not self-segment-dense since any segment with endpoints on the sphere does not intersect the sphere in any other points. Moreover, $U$ is not segment-dense in the sense of The Luc either, because for every $x\in U$ one has $[0,x]\cap U=\{0\}.$ Note that the same argument is also valid if $X$ is a  normed space. In this case one can take two points $x$ and $-x$ belonging to the unit sphere, and show that the intersection of the segment $[-x,x]$ with the unit sphere is $\{-x,x\}.$ Moreover, if $X$ is not strictly convex then the unit sphere contains segments.
\end{example}

In what follows we  provide an example of a dense set in the strong topology of a Hilbert space, which is also segment-dense but is not self-segment-dense (see also Proposition 3.4, \cite{BD}).

\begin{example}\rm\label{ex11} Let $V=X=l_2$ be the real Hilbert space of square summable sequences and define $U$ to be the set
\begin{equation*}
U:=\{(x)=(x_1,...,x_n,...) \in l_2 : |x_1|<\sup_{n\ge 2}n|x_n|\}.
\end{equation*}
Then $U$ is open, dense and segment-dense in $l_2$, but is not self-segment-dense.
\end{example}
\begin{proof}
First of all observe that the complement of $U$ is closed. Indeed, for a sequence $(x^i)=(x_1^i,...,x_n^i,...)\in l_2\setminus U$ converging to $(x)=(x_1,...,x_n,...)\in l_2$, one has $|x_1^i|\ge n|x_n^i|$ for all $n\in\N$. Since $x_n^i\To x_n,\,i\To\infty$ for all $n\in\N$, obviously
$|x_1|\ge n|x_n|$ for all $n\in\N$. Hence, $l_2\setminus U$ is closed, which shows that $U$ is open. Consider now $(x)=(x_1,...,x_n,...)\in l_2\setminus U.$ We show that for every $\e>0$ there exists $(y)\in U$ such that $\|(x)-(y)\|<\e.$ Indeed, let $\e>0$ and consider $n_0\in\N$ such that $n_0>\frac{3|x_1|}{\e}.$ 
Let $(y)=\left(y_1,y_2,...,y_n,...\right)\in l_2, y_n=x_n$ for all $n\neq n_0$ and $y_{n_0}=|x_{n_0}|+\frac\e3.$ Then $n_0|y_{n_0}|>|y_1|$, hence $(y)\in U$.

On the other hand $$\|(y)-(x)\|=|\frac\e3+|x_{n_0}|-x_{n_0}|\le2|x_{n_0}|+\frac\e3\le\frac{2|x_1|}{n_0}+\frac\e3<\frac{2\e}{3}+\frac\e3=\e.$$

Next, we show that $U$ is not self-segment-dense in $l_2.$ Indeed, consider $(x)=(1,1,\frac19,...,\frac{1}{n^2},....)$ and $(y) = (1,-1,\frac{1}{18},...,\frac{1}{2n^2},...)\in U.$ We show that $(1-t)(x)+t(y)\cap U=\emptyset$ for all $t\in\left[\frac14,\frac34\right].$ We have $(1-t)(x)+t(y)=\left(1,1-2t,\frac{2(1-t)+t}{18},...,\frac{2(1-t)+t}{2n^2},...\right),$ hence $(1-t)(x)+t(y)\in U,$ if and only if $1<2|1-2t|.$ But then $t\not\in \left[\frac14,\frac34\right].$ Consequently $\cl([(x),(y)]\cap U)\neq[(x),(y)],$ which shows that $U$ is not self-segment-dense in $l_2.$

It remained to show, that $U$ is segment-dense in $l_2.$ Actually we will show something more, that is, for every $(y)\in l_2$ there exists $(x)\in U$ such that $[(x),(y)[\subseteq U.$ When $(y)\in U$ the statement follows from the fact that $U$ is open. Let now $(y)=(y_1,y_2,...,y_n,..)\in l_2\setminus U.$
Then $|y_1|\ge n|y_n|$ for all $n\in\N,$ hence one has $-\frac{|y_1|}{n}\le y_n\le\frac{|y_1|}{n}$ for all $n\in\N.$ Consider $(x)=\left(y_1,\frac{\sqrt{\ln 2}}{2},...,\frac{\sqrt{\ln n}}{n},...\right).$ By using the integral test, one easily can show that $(x)\in l_2.$ Since $\sup_{n\ge 2}n|x_n|=\infty>|y_1|$ we get that $(x)\in U.$ For $t\in]0,1]$ we show that $(1-t)(y)+t(x)\in U,$ that is $[x,y[\subseteq U.$ Indeed, one has
$$(1-t)(y)+t(x)=\left(y_1,(1-t)y_2+t\frac{\sqrt{\ln 2}}{2},...,(1-t)y_n+t\frac{\sqrt{\ln n}}{n},...\right).$$

Since for $n\in\N$ big enough and for $t>0$ fixed, one has $$n\left|(1-t)y_n+t\frac{\sqrt{\ln n}}{n}\right|\ge n\left|-(1-t)\frac{|y_1|}{n}+t\frac{\sqrt{\ln n}}{n}\right|\ge t\sqrt{\ln n}-(1-t)|y_1|,$$ we get that $$\sup_{n\in \N}n\left|(1-t)y_n+t\frac{\sqrt{\ln n}}{n}\right| =\infty.$$ Hence, $(1-t)(y)+t(x)\in U$ for all $t\in]0,1].$
\end{proof}

\begin{remark}\rm
Note that every  convex subset of a topological vector space is self-segment-dense in its closure.  In particular dense subspaces and dense affine subsets are
self-segment-dense. Therefore if $U\subseteq V$ is dense in $V$ and $V$ is convex, then $\co(U)$ is self-segment-dense in $V.$ 
\end{remark}

Next we provide some remarkable properties of a self-segment-dense set. We also show that these results do not hold if we replace the self-segment-dense property of the set involved by its denseness.

The following lemma (see \cite{LaVi1}, \cite{AR}) gives an interesting characterization of self-segment-dense sets and will be used in the sequel.

\begin{lemma}\label{l25}\rm[{Lemma 2.1, \cite{LaVi1}}] Let $X$ be a Hausdorff locally convex topological vector space,
let $V\subseteq X$ be a convex set and let $U\subseteq V$ a self-segment-dense set in $V.$ Then, for all finite subset $\{u_1,u_2,\ldots,u_n\}%
\subseteq U$ one has $$\cl(\co\{u_1,u_2,\ldots,u_n\}\cap U)=\co\{u_1,u_2,\ldots,u_n\}.$$
\end{lemma}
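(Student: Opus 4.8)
The plan is to prove by induction on $n$ the non‑trivial inclusion
$$\co\{u_1,\dots,u_n\}\subseteq\cl(\co\{u_1,\dots,u_n\}\cap U),$$
the reverse inclusion being immediate: $\co\{u_1,\dots,u_n\}$ is the image of the compact standard simplex $\{\l\in\R^n:\l_i\ge 0,\ \sum_i\l_i=1\}$ under the map $\l\mapsto\sum_i\l_iu_i$, which is continuous by continuity of the vector space operations; hence $\co\{u_1,\dots,u_n\}$ is compact and, $X$ being Hausdorff, closed, so it already contains the closure of its subset $\co\{u_1,\dots,u_n\}\cap U$. Throughout write $S:=\co\{u_1,\dots,u_n\}$ and $S':=\co\{u_1,\dots,u_{n-1}\}$.

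For the induction, the case $n=1$ is trivial since $u_1\in U$ gives $S\cap U=\{u_1\}=S$. Assume the statement for $n-1$ and fix $x\in S$. If $x=u_n$ then $x\in S\cap U$ and there is nothing to prove, so suppose $x\ne u_n$; then $x$ can be written as $x=(1-\mu)y+\mu u_n$ with $y\in S'$ and $\mu\in[0,1)$. Let $W$ be an arbitrary open neighbourhood of $x$. The map $\phi:S'\to S$, $\phi(z):=(1-\mu)z+\mu u_n$, is continuous and $\phi(y)=x\in W$, so $\phi^{-1}(W)$ is an open neighbourhood of $y$ in $S'$. By the induction hypothesis $y\in S'=\cl(S'\cap U)$, hence there exists $z\in\phi^{-1}(W)\cap S'\cap U$. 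Now $z\in U$ and $u_n\in U$, so by self‑segment‑density $[z,u_n]\cap U$ is dense in $[z,u_n]$; since $W\cap[z,u_n]$ is a non‑empty (it contains $\phi(z)$) relatively open subset of $[z,u_n]$, we may pick $w\in W\cap[z,u_n]\cap U$. Finally $[z,u_n]\subseteq S$ because $z\in S'\subseteq S$, $u_n\in S$ and $S$ is convex, so $w\in S\cap U$ and $w\in W$. As $W$ was arbitrary, $x\in\cl(S\cap U)$, which closes the induction.

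The argument involves no deep obstacle, but two points require care. First, one must know that finite‑dimensional simplices are closed in $X$; this is where the Hausdorff hypothesis enters, and it is needed both for the easy inclusion and for the induction hypothesis to be the stated equality. Second — and this is the heart of the matter — in the inductive step the auxiliary point $z$ must be chosen simultaneously close to $y$ (so that $\phi(z)\in W$) \emph{and} inside $U$, since otherwise $[z,u_n]$ would not be a segment to which the hypothesis applies. It is precisely the \emph{self}-segment‑dense property, requiring that $[u,v]\cap U$ be dense in $[u,v]$ whenever \emph{both} $u,v\in U$, that makes this double demand compatible; the same scheme breaks down for merely dense sets or for segment‑dense sets, in accordance with Examples \ref{ex1} and \ref{ex11}.
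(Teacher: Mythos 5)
Your argument is correct. Note that the paper itself gives no proof of this lemma --- it is imported as Lemma 2.1 of \cite{LaVi1} --- so there is nothing in the text to compare against; your induction on $n$, peeling off the vertex $u_n$ and combining the induction hypothesis on $\co\{u_1,\dots,u_{n-1}\}$ with the self-segment-density of $[z,u_n]\cap U$ for a suitably chosen $z\in U$, is a complete and essentially the standard proof, and you correctly isolate both the role of the Hausdorff hypothesis (closedness of finite-dimensional simplices) and the reason the argument needs \emph{both} endpoints of the auxiliary segment to lie in $U$.
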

\begin{remark}\rm\label{r28} Observe that under the hypothesis of Lemma \ref{l25}, one has that the intersection $\co\{u_1,u_2,\ldots,u_n\}\cap U$ is self-segment-dense in $\co\{u_1,u_2,\ldots,u_n\}.$
\end{remark}
Let us emphasize that this result does not remain valid in case we replace the self-segment-denseness of $U$ in $V$, by its denseness in $V,$ as the next example shows.
\begin{example}\rm[Example 2.3, \cite{La}] Let $V$ be the closed unit ball of an infinite dimensional Banach space $X$, and let $x,y\in V, \, x\neq y.$ Moreover, consider $u,v\in ]x,y[,$ $u=x+t_1(y-x),$ $v=x+t_2(y-x),$ with $t_1,t_2\in]0,1[,\,t_1<t_2.$ Then obviously $U=V\setminus [u,v]$ is dense in $V$, but not self-segment-dense, since for $x,y\in U$ the set  $[x,y]\cap U=[x,u[\,\cup\, ]v,y]$ is not dense in $[x,y].$ This also shows, that $\cl(\co\{x,y\}\cap U)\neq\co\{x,y\}.$
\end{example}

An easy consequence of Lemma \ref{l25} is the following more general result.
\begin{lemma}\label{l26} Let $X$ be a Hausdorff locally convex topological vector space,
let $V\subseteq X$ be a convex set and let $U\subseteq V$ a self-segment-dense set in $V.$ Then, for every subset $\mathcal{S}\subseteq U$ one has $$\cl(\co(\mathcal{S})\cap U)=\overline{\co}(\mathcal{S}).$$
In other words, $\co(\mathcal{S})\cap U$ is self-segment-dense in $\co(\mathcal{S}).$
\end{lemma}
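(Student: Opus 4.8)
The plan is to reduce the general case of an arbitrary subset $\mathcal{S}\subseteq U$ to the finite case already settled in Lemma \ref{l25}, by exploiting the fact that the convex hull $\co(\mathcal{S})$ is the directed union of the convex hulls $\co(F)$ over finite subsets $F\subseteq\mathcal{S}$. The inclusion $\cl(\co(\mathcal{S})\cap U)\subseteq\overline{\co}(\mathcal{S})$ is immediate and monotone: $\co(\mathcal{S})\cap U\subseteq\co(\mathcal{S})$, so taking closures gives $\cl(\co(\mathcal{S})\cap U)\subseteq\cl(\co(\mathcal{S}))=\overline{\co}(\mathcal{S})$. So the content is the reverse inclusion $\overline{\co}(\mathcal{S})\subseteq\cl(\co(\mathcal{S})\cap U)$.

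For the reverse inclusion I would first show $\co(\mathcal{S})\subseteq\cl(\co(\mathcal{S})\cap U)$, and then take closures. Fix $x\in\co(\mathcal{S})$. By definition of convex hull, $x\in\co(F)$ for some finite subset $F=\{u_1,\ldots,u_n\}\subseteq\mathcal{S}\subseteq U$. Applying Lemma \ref{l25} to this finite subset gives $\cl(\co(F)\cap U)=\co(F)$, so in particular $x\in\co(F)=\cl(\co(F)\cap U)$. Since $\co(F)\cap U\subseteq\co(\mathcal{S})\cap U$, monotonicity of closure yields $x\in\cl(\co(F)\cap U)\subseteq\cl(\co(\mathcal{S})\cap U)$. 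As $x\in\co(\mathcal{S})$ was arbitrary, $\co(\mathcal{S})\subseteq\cl(\co(\mathcal{S})\cap U)$. Taking closures of both sides and using that the right-hand side is already closed, $\overline{\co}(\mathcal{S})=\cl(\co(\mathcal{S}))\subseteq\cl(\co(\mathcal{S})\cap U)$, which completes the two inclusions and hence the displayed equality.

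Finally, to obtain the ``in other words'' clause — that $\co(\mathcal{S})\cap U$ is self-segment-dense in $\co(\mathcal{S})$ — I would argue as follows. Denseness of $\co(\mathcal{S})\cap U$ in $\co(\mathcal{S})$ follows from the equality just proved, since $\co(\mathcal{S})\subseteq\overline{\co}(\mathcal{S})=\cl(\co(\mathcal{S})\cap U)$. For the segment condition, take any $p,q\in\co(\mathcal{S})\cap U$; then $p,q\in U$, so by the self-segment-denseness of $U$ in $V$ the set $[p,q]\cap U$ is dense in $[p,q]$. But $[p,q]\subseteq\co(\mathcal{S})$ by convexity of $\co(\mathcal{S})$, so $[p,q]\cap U=[p,q]\cap\bigl(\co(\mathcal{S})\cap U\bigr)$, and this set is dense in $[p,q]$. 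Hence $\co(\mathcal{S})\cap U$ is self-segment-dense in $\co(\mathcal{S})$.

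I do not anticipate a serious obstacle here: the argument is essentially a directed-union / monotone-closure bookkeeping on top of Lemma \ref{l25}. The only point that requires a moment's care is recognizing that one should prove $\co(\mathcal{S})\subseteq\cl(\co(\mathcal{S})\cap U)$ first and only then close up — applying Lemma \ref{l25} directly to the (possibly infinite, possibly non-compact) set $\mathcal{S}$ is not available, so the finite-subset detour is the essential move.
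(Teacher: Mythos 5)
Your proof is correct and follows essentially the same route as the paper: both reduce to Lemma \ref{l25} via a finite subset $F\subseteq\mathcal{S}$ with $x\in\co(F)$, and both verify the segment condition directly from the self-segment-denseness of $U$ in $V$ together with the inclusion $[p,q]\subseteq\co(\mathcal{S})$. If anything, your chain $x\in\co(F)=\cl(\co(F)\cap U)\subseteq\cl(\co(\mathcal{S})\cap U)$ is a slightly cleaner rendering of the paper's neighbourhood argument.
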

\begin{proof} Let $x\in \co(\mathcal{S}).$ We show that for every neighbourhood $G$ of $x$ one has $G\cap U\neq\emptyset.$ Indeed, $x=\sum_{i=1}^n \l_iu_i$ for some $u_i\in S\subseteq U,\l_i\ge 0,\,i\in\{1,2,\ldots,n\},\,n\in\N\,,\sum_{i=1}^n\l_i=1.$ Hence, $x\in\co\{u_1,u_2,\ldots,u_n\}$ and according to Lemma \ref{l25}, $\co\{u_1,u_2,\ldots,u_n\}\cap U=\co\{u_1,u_2,\ldots,u_n\}\cap S$ is dense in $\co\{u_1,u_2,\ldots,u_n\}.$ Consequently $G\cap U\neq\emptyset$ which shows that $\co(\mathcal{S})\cap U$ is dense in $\co\mathcal{S}.$ It is also self-segment-dense since for $u_1,u_2\in \co(\mathcal{S})\cap U$ one has
$$\cl([u_1,u_2]\cap\co(\mathcal{S})\cap U)=\cl([u_1,u_2]\cap U)=[u_1,u_2].$$
\end{proof}

\begin{remark}\rm\label{r210} Obviously, the precedent lemma also ensures that ${\co}(\mathcal{S})\cap U$ is self-segment-dense in $\overline{\co}(\mathcal{S}).$ A particular instance is, that $\cl( U)=\overline{\co}(U).$
\end{remark}

In what follows we present a simple but very useful result concerning on self-segment-dense sets, by showing  that the self-segment-dense property of a subset of some base set implies the convexity of the base set.

\begin{lemma}\label{l2.6} Let $V\subseteq X$ be closed and let $U\subseteq V$ be dense in $V,$ with the property that for all $u,v\in U$ one has that $[u,v]\cap U$ is dense in $[u,v].$ Then $V$ is convex, hence $U$ is actually self-segment-dense in $V.$
\end{lemma}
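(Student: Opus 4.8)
Since $V$ is closed and $U\subseteq V$ is dense in $V$, we have $\cl(U)=V$; so it suffices to prove that $\cl(U)$ is convex, and then the final assertion is immediate from Definition \ref{dd}. Fix $x,y\in V=\cl(U)$ and $t\in[0,1]$, and put $z:=(1-t)x+ty$. The plan is to show $z\in\cl(U)$ by approximating $x$ and $y$ separately by points of $U$ and then exploiting the segment-density hypothesis along the segment joining those two points.

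So let $G$ be an arbitrary open neighbourhood of $z$. Since $X$ is a topological vector space, the map $(a,b)\mapsto(1-t)a+tb$ from $X\times X$ to $X$ is continuous; hence there are open neighbourhoods $G_x$ of $x$ and $G_y$ of $y$ with $(1-t)G_x+tG_y\subseteq G$. Because $U$ is dense in $V$ and $x,y\in V$, we may pick $u\in U\cap G_x$ and $v\in U\cap G_y$. Then the point $(1-t)u+tv$ lies in $(1-t)G_x+tG_y\subseteq G$, so $G$ is an open neighbourhood of $(1-t)u+tv\in[u,v]$. By hypothesis $[u,v]\cap U$ is dense in $[u,v]$, so there exists $w\in[u,v]\cap U\cap G$; in particular $U\cap G\neq\emptyset$.

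Since $G$ was an arbitrary neighbourhood of $z$, this shows $z\in\cl(U)=V$. As $x,y\in V$ and $t\in[0,1]$ were arbitrary, $[x,y]\subseteq V$ for all $x,y\in V$, i.e. $V$ is convex. Finally, $U$ is dense in the convex set $V$ and satisfies $[u,v]\cap U$ dense in $[u,v]$ for all $u,v\in U$, which is exactly the condition for $U$ to be self-segment-dense in $V$. $\qed$

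\textbf{Remark on the difficulty.} There is no substantial obstacle here: the only point requiring a little care is the use of joint continuity of the affine combination map in the topological vector space to transfer a neighbourhood of $z$ to neighbourhoods of $x$ and $y$; once $u,v\in U$ are chosen near $x,y$, the segment-density assumption along $[u,v]$ does the rest. Closedness of $V$ is used precisely to guarantee $\cl(U)\subseteq V$, so that the limit point $z$ we produce actually lies in $V$.
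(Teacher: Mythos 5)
Your proof is correct and follows essentially the same route as the paper: approximate $x$ and $y$ by points $u,v\in U$, use the density of $[u,v]\cap U$ in $[u,v]$ to find points of $U$ near $(1-t)u+tv$, and invoke the closedness of $V$ (via $\cl(U)=V$) to conclude. The only cosmetic difference is that you argue directly with neighbourhoods to show $z\in\cl(U)$, whereas the paper first notes $[u,v]\subseteq V$ for $u,v\in U$ and then runs a net-based argument by contradiction.
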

\begin{proof} Observe that for all $u,v\in U$ one has $[u,v]\subseteq V$ since
$$[u,v]=\cl([u,v]\cap U)\subseteq\cl(U)=V.$$
Assume now that $V$ is not convex, i.e., there exist $x,y\in V,$ and $t_0\in(0,1)$ such that $(1-t_0) x+t_0y=z_0\not\in V.$ Since $U$ is dense in $V$, there exists the nets $(x_i),(y_i)\subseteq U$  such that $x_i\To x,\,y_i\To y.$ But then $[x_i,y_i]\subseteq V$ for all $i$, hence $(1-t_0)x_i+t_0y_i\in V$ for all $i$. Since $V$ is closed we obtain that $\lim((1-t_0)x_i+t_0y_i)=z_0\in V,$ contradiction.
\end{proof}
\begin{remark}\rm Note that the closedness of $V$ in the hypothesis of Lemma \ref{l2.6} is essential. Indeed, let $A\subseteq \R^2$ be the
 square with vertices $(-1,-1),(-1,1),(1,1)$ and $(1,-1),$ let $V$ be $A\setminus](-1,-1),(1,-1)[$ and let $U$ be the interior of $A.$  Here, the interior of the square is meant relative to the plane of the square. Then obviously $U$ has the property, that for all $u,v\in U$ one has that $[u,v]\cap U$ is dense in $[u,v].$ Moreover, $U$ is dense in $V$.  Observe that $V$ is not convex, because $(-1,-1),(1,-1)\in V$ but $[(-1,-1),(1,-1)]\not\subseteq V.$ This is due to the fact that $V$ is not closed, hence Lemma \ref{l2.6} cannot be applied.
\end{remark}

\section{Convex functions and dense sets}

In what follows we provide some  results concerning convex functions on dense sets. Our results are based on the concepts of a self-segment-dense and a segment-dense set, respectively. We obtain conditions that ensure the coincidence of two proper, convex  and lower semicontinuous function that are equal on a dense subset of their domain. Further, we analyze the situation when the infimum  of a convex function is equal to the infimum of that function taken over a dense subset of its domain. An example shows that the use of  the concepts of  a self-segment-dense set and segment-dense set are essential in order to obtain these results. 
\begin{theorem}\label{t2.8} Let $f:X\To\oR$ be a proper function and let $U\subseteq \dom f$ be a dense set.
\begin{itemize}
  \item[(a)] Then, $\overline{f_U}$  is lower semicontinuous, $U$ is graphically dense in $\dom \overline{f_U}$ and  $\inf_{x\in U}f(x)=\inf_{x\in X}\overline{f_U}(x).$  
  \item[(b)] Assume that $f$ is  lower semicontinuous on $U.$ Then, $f(u)=\overline{f_U}(u)$ for all $u\in U$. Moreover, if $f$ is lower semicontinuous on $X$ then $\dom\overline{f_U}\subseteq\dom f$  and  $\overline{f_U}(x)\ge f(x)$ for all $x\in\ X.$  Further, in this case, $\overline{f_U}=f$ on $X$, if and only if, $U$ is graphically dense in $\dom f.$
  \item[(c)] Assume that $\dom f$ is convex, $U$ is self-segment-dense in $\dom f$ and $f$ is convex on $U$. Then, $\overline{f_U}$  is   also convex and and $\epi f_U$ is self-segment-dense in $\epi\overline{f_U}.$
\end{itemize}
\end{theorem}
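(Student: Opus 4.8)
The plan is to route everything through the single set $\epi f_U$ and its closure. A preliminary observation: $\cl(\epi f_U)$ really is an epigraph, because $\epi f_U$ is invariant under upward vertical translation and hence so is its closure (translate a convergent net), while $\cl(\epi f_U)$ being closed forces each vertical slice to be empty, all of $\{x\}\times\R$, or a closed upper half-line; thus $\epi\overline{f_U}=\cl(\epi f_U)$ as sets and $\overline{f_U}$ is lower semicontinuous because its epigraph is closed. Also $(u,f(u))\in\epi f_U$ gives $\overline{f_U}(u)\le f(u)$ for every $u\in U$, a fact used repeatedly. For (a), if $x\in\dom\overline{f_U}$ there is a net $(u_i,r_i)\subseteq\epi f_U$ with $u_i\To x$, $r_i\To\overline{f_U}(x)$, $f(u_i)\le r_i$; combining lower semicontinuity of $\overline{f_U}$ at $x$ with $\overline{f_U}(u_i)\le f(u_i)\le r_i\To\overline{f_U}(x)$ and squeezing gives $\overline{f_U}(u_i)\To\overline{f_U}(x)$ and $f(u_i)\To\overline{f_U}(x)$, which is the graphical density of $U$ in $\dom\overline{f_U}$. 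Then $\inf_U f=\inf_X\overline{f_U}$: one inequality is immediate from $\overline{f_U}\le f$ on $U$, and for the other, any $x$ with $\overline{f_U}(x)$ finite satisfies $\overline{f_U}(x)=\lim f(u_i)\ge\inf_U f$ (the value $+\infty$ is harmless, and $\overline{f_U}(x)=-\infty$ forces $\inf_U f=-\infty$, as then $(x,-n)\in\cl(\epi f_U)$ for every $n$).

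For (b), assume $f$ is lower semicontinuous on $U$ and apply the net above with $x=u\in U$: lower semicontinuity of $f$ at $u$ yields $f(u)\le\liminf f(u_i)\le\liminf r_i=\overline{f_U}(u)$, so $f(u)=\overline{f_U}(u)$. If $f$ is lower semicontinuous on all of $X$, the same net at an arbitrary $x\in\dom\overline{f_U}$ gives $f(x)\le\liminf f(u_i)\le\overline{f_U}(x)<+\infty$, hence $\dom\overline{f_U}\subseteq\dom f$ and $f\le\overline{f_U}$ on $X$. For the last equivalence: if $\overline{f_U}=f$ on $X$, then (a) is literally the graphical density of $U$ in $\dom f$; conversely, if $U$ is graphically dense in $\dom f$, then for $x\in\dom f$ a net $(u_i)\subseteq U$ with $u_i\To x$, $f(u_i)\To f(x)$ exhibits $(x,f(x))$ as a limit of points of $\epi f_U$, so $\overline{f_U}(x)\le f(x)$, and together with $f\le\overline{f_U}$ this gives $\overline{f_U}=f$ on $X$ (points outside $\dom f$ being trivial).

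For (c), the main point, note that convexity of $\dom f$ is built into the hypothesis that $U$ is self-segment-dense in $\dom f$. Since a closure of a convex set is convex and $\overline{f_U}$ is convex iff $\epi\overline{f_U}=\cl(\epi f_U)$ is convex, one would like $\epi f_U$ to be convex; it need not be, as $U$ need not be convex, so I argue directly that $[P,Q]\subseteq\cl(\epi f_U)$ for all $P=(u,r),Q=(v,s)\in\epi f_U$. Fix $t\in[0,1]$; self-segment-density gives $s_j\To t$ with $w_j:=(1-s_j)u+s_jv\in U$, and convexity of $f$ on $U$ together with $f(u)\le r$, $f(v)\le s$ gives $f(w_j)\le(1-s_j)r+s_js=:\rho_j$, whence $(w_j,\rho_j)\in\epi f_U$ and $(w_j,\rho_j)\To(1-t)P+tQ\in\cl(\epi f_U)$. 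A routine double-net passage upgrades this to $[\widetilde P,\widetilde Q]\subseteq\cl(\epi f_U)$ for all $\widetilde P,\widetilde Q\in\cl(\epi f_U)$, so $\epi\overline{f_U}$ is convex and $\overline{f_U}$ is convex.

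Finally, for the self-segment-density of $\epi f_U$ in $\epi\overline{f_U}$: $\epi f_U\subseteq\cl(\epi f_U)=\epi\overline{f_U}$ and is trivially dense there, $\epi\overline{f_U}$ is convex by the previous step, and for $P,Q\in\epi f_U$ the points $(w_j,\rho_j)$ constructed above lie in $[P,Q]\cap\epi f_U$ and approximate the arbitrary point $(1-t)P+tQ$ of $[P,Q]$, so $[P,Q]\cap\epi f_U$ is dense in $[P,Q]$; this is exactly Definition~\ref{dd}. I expect the convexity of $\overline{f_U}$ to be the only genuinely delicate step: it cannot be settled inside $U$ alone and requires interleaving the segment-density of $U$, the convexity inequality of $f$ (available only when the convex combination falls back into $U$), and the passage to the closure. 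Everything else is net-chasing.
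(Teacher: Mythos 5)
Your proposal is correct and follows essentially the same route as the paper: everything is run through $\epi f_U$ and its closure via net arguments, and part (c) interleaves the self-segment-density of $U$ with the convexity inequality of $f$ on $U$ to show that segments between points of $\epi f_U$ are densely filled, exactly as in the paper's proof. The only cosmetic differences are that you inline the double-net closure argument that the paper delegates to Lemma~\ref{l2.6}, and you add the (worthwhile) preliminary check that $\cl(\epi f_U)$ is genuinely an epigraph.
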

\begin{proof}
$(a)$ The lower semicontinuity of $\overline{f_U}$ follows from the closedness of $\epi\overline{f_U}$.
We show that $U$ is graphically dense in $\dom \overline{f_U}.$ Let $(u_i,r_i)\subseteq \epi f_U,$ a net converging to $(x,\overline{f_U}(x))\in \epi \overline{f_U}.$ Then, $(u_i,r_i)\subseteq \epi \overline{f_U},$ hence $r_i\ge \overline{f_U}(u_i)$. Consequently $\overline{f_U}(x)=\lim r_i\ge\liminf \overline{f_U}(u_i)\ge \overline{f_U}(x),$ which shows that $\liminf \overline{f_U}(u_i)= \overline{f_U}(x).$ (Note that we have also shown that
$\liminf {f}(u_i)= \overline{f_U}(x).$) Hence, there exists a subnet of  the net $(\overline{f_U}(u_i))$, say $(\overline{f_U}(u_j))$ which converges to $\overline{f_U}(x).$ Since $(u_i)$ converges to $x$, obviously $\lim u_j=x$, which completes the proof. (One may similarly show, that there exists a net $(u_j)\subseteq U$ such that $\lim u_j=x$ and $\lim f(u_j)=\overline{f_U}(x).$)

 We show next, that $\inf_{x\in U}f(x)=\inf_{x\in X}\overline{f_U}(x).$ Note at first that \\$\inf_{x\in U}f(x)\ge\inf_{x\in X}\overline{f_U}(x).$ Indeed, for every $x\in U$ one has $(x,f(x))\in\epi f_U\subseteq \epi\overline{f_U},$ hence $\overline{f_U}(x)\le f(x)$ for all $x\in U.$ Consequently, $$\inf\nolimits_{x\in X}\overline{f_U}(x)\le \inf\nolimits_{x\in U}\overline{f_U}(x)\le \inf\nolimits_{x\in U}f(x).$$
 If $\inf\nolimits_{x\in U}f(x)=-\infty$ then the statements is obvious.  Assume now, that \\$\inf_{x\in U}f(x)=\a>-\infty.$ Then, $f(x)\ge \a$ for all $x\in U.$ Let $x\in \dom \overline{f_U}.$ Then, according to the previous part of proof, there exists a net $(u_i)\subseteq U$, such that $\overline{f_U}(x)=\lim f(u_i)\ge \a,$ hence $\inf_{x\in X}\overline{f_U}(x)\ge \a.$ This also shows that, in this case, $\overline{f_U}$ is proper.

$(b)$ We show   that $\overline{f_U}(u)=f(u)$ for all $u\in U.$ Let $u\in U.$ Since $(u,f(u))\in\epi f_U\subseteq\epi\overline{f_U}$ one has $$\overline{f_U}(u)\le f(u).$$
Let $r\in\R$ such that  $(u,r)\in\epi\overline{f_U}.$ Then, there exists a net $((u_i,r_i))\subseteq\epi f_U$ such that $(u_i,r_i)\To(u,r).$ Obviously $r_i\ge f(u_i)$ and in virtue of lower semicontinuity of $f$ on $U$ we have
$$r=\lim r_i=\liminf r_i\ge\liminf f(u_i)\ge f(u).$$
Sice $r$ is a arbitrary, provided $r\ge\overline{f_U}(u)$, one has
$$\overline{f_U}(u)\ge f(u).$$

Assume now that $f$ is lower semicontinuous on $X$. Obviously, $\epi f$ is closed, hence $\epi\overline{f_U}=\cl(\epi f_U)\subseteq\epi f.$ The latter relation leads to $$\dom\overline{f_U}=\pr\nolimits_X(\epi\overline{f_U})\subseteq\pr\nolimits_X(\epi f)=\dom f.$$ Consequently, for all $x\in\dom\overline{f_U}$ one has $(x,\overline{f_U}(x))\in\epi\overline{f_U}\subseteq\epi f$ which leads to $f(x)\le\overline{f_U}(x),$ for all $x\in\dom\overline{f_U}$. Since $\overline{f_U}(x)=+\infty$ for all $x\in X\setminus \dom \overline{f_U}$ we get that $f(x)\le\overline{f_U}(x),$ for all $x\in X.$ This also shows that, in this case, $\overline{f_U}$ is proper.

It  remained to show, that $\overline{f_U}=f$ on $X,$ if and only if $U$ is graphically dense in $\dom f.$ Assume that $\overline{f_U}=f$ on $X$ and let $x\in \dom f.$ Then, according to $(a)$, there exists a net $(u_j)\subseteq U$ such that $\lim u_j=x$ and $\lim \overline{f_U}(u_j)=\overline{f_U}(x).$ But $\overline{f_U}(u_j)=f(u_j)$ and $\overline{f_U}(x)=f(x).$ Conversely, consider a net $(u_j)\subseteq U$ such that $\lim u_j=x$ and $\lim f(u_j)=f(x).$ Note that $\overline{f_U}(x)\le \lim f(u_j)=f(x).$ On the other hand, $\overline{f_U}(x)\ge f(x)$, which completes the proof.

$(c)$ Assume now that $\dom f$ is convex, $U$ is self-segment-dense in $\dom f$ and that $f$ is convex on $U.$ Observe that by definition we have that $\epi f_U$ is dense in $\epi\overline{f_U}$ and $\epi\overline{f_U}$ is closed.

 Note that $\overline{f_U}$ is convex, if and only if $\epi\overline{f_U}$ is convex. For showing the convexity of $\epi\overline{f_U}$ we use Lemma \ref{l2.6}. To this end, we show that for all $(u,r),(v,s)\in \epi f_U$ we have that $$\cl([(u,r),(v,s)]\cap \epi f_U)=[(u,r),(v,s)].$$
Indeed, assume the contrary, i.e.,  there exist $(u,r),(v,s)\in \epi f_U$ such that $\cl([(u,r),(v,s)]\cap \epi f_U)\neq[(u,r),(v,s)].$ Then, there exist two distinct points $(u_1,r_1),(v_1,s_1)\in [(u,r),(v,s)]$ such that $$](u_1,r_1),(v_1,s_1)[\cap\epi f_U=\emptyset.$$
Since $u_1,v_1\in[u,v]$ and $U$ is self-segment-dense in $\dom f$, one has that $]u_1,v_1[\cap U\neq\emptyset,$ hence there exists $t_0\in]0,1[$ such that $$(1-t_0)u_1+t_0v_1=u_0\in]u_1,v_1[\cap U.$$ By the convexity of $f$ on $U$ we have $$f(u_0)\le(1-t_0)f(u_1)+t_0f(v_1)\le (1-t_0)r+t_0s,$$ which leads to $$(u_0,(1-t_0)r+t_0s)\in\epi f_U.$$
The latter relation shows that $(1-t_0)(u_1,r)+t_0(v_1,s)\in\epi f_U$ which contradicts the assumption that $](u_1,r_1),(v_1,s_1)[\cap\epi f_U=\emptyset.$ Thus, according to Lemma \ref{l2.6}  $\epi\overline{f_U}$ is convex and $\epi f_U$ is self-segment-dense in $\epi\overline{f_U}.$
 \end{proof}

\begin{remark}\rm\label{r33} Note that $\inf_{x\in U}f(x)=\inf_{x\in X}\overline{f_U}(x)$ implies that  $\overline{f_U}$ is  proper, provided $\inf_{x\in U}f(x)\neq-\infty$. The convexity of $\overline{f_U}$ is not guaranteed if in the hypothesis of Theorem \ref{t2.8} $(c)$ we   assume only that $U$ is dense in $\dom f,$ as the next example shows.
\end{remark}
\begin{example}\rm\label{ex2} Let $X$ be an infinite dimensional real Hilbert space. According to Example \ref{ex1},  the union of the unit sphere with $\{0\}$,  is dense with respect to the weak topology in the unit ball $B=\left\{ x\in X:\left\Vert x\right\Vert \leq1 \right\}$ of $X,$ but this set is not self-segment-dense in $B.$ Hence, let $U=\left\{ x\in X:\left\Vert x\right\Vert =1\right\}\cup\{0\}.$  Let $y_0\in U,\,y_0\neq 0,$ and consider the function $$f:X\To\oR,\,f(x)=\left\{\begin{array}{ll}\<y_0,x\>^3,\mbox{ if }x\in B\\+\infty,\mbox{ otherwise.}\\ \end{array}\right.$$

 Then, trivially, $f$ is convex on $U,$ since for $x,0,-x\in U$ one has $f(0)=\frac12 f(-x)+\frac12 f(x),$ but $\overline{f_U}$ is not convex on $X$. Indeed, it can easily be verified that $\overline{f_U}(y_0)=1,\,\overline{f_U}(-y_0)=-1$ and $\overline{f_U}\left(-\frac12 y_0\right)=-\frac18.$ Obviously $-\frac12 y_0=\left(1-\frac14\right)(-y_0)+\frac14 y_0\in[-y_0,y_0]$, but $$-\frac18=\overline{f_U}\left(\left(1-\frac14\right)(-y_0)+\frac14 y_0\right)>\left(1-\frac14\right)\overline{f_U}(-y_0)+\frac14\overline{f_U}(y_0)=-\frac12.$$
\end{example}
\begin{corollary}\label{c1}  Let $f:X\to\oR$ be a proper and continuous function and let $U\subseteq \dom f$ be a dense set. Then, $\inf_{x\in U}f(x)=\inf_{x\in X} f(x).$
\end{corollary}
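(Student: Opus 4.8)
The plan is to derive Corollary \ref{c1} directly from Theorem \ref{t2.8}. First I would observe that a proper continuous function $f$ is in particular lower semicontinuous on all of $X$ (hence on $U$), so parts $(a)$ and $(b)$ of Theorem \ref{t2.8} both apply. From $(a)$ we already have the identity $\inf_{x\in U}f(x)=\inf_{x\in X}\overline{f_U}(x)$, so the entire task reduces to showing $\overline{f_U}=f$ on $X$. By the last sentence of part $(b)$, since $f$ is lower semicontinuous on $X$, this equality holds if and only if $U$ is graphically dense in $\dom f$.

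So the one nontrivial point is to verify that $U$ is graphically dense in $\dom f$: for every $x\in\dom f$ there must exist a net $(u_i)\subseteq U$ with $u_i\To x$ and $f(u_i)\To f(x)$. Here is exactly where full continuity of $f$ (not merely lower semicontinuity) is used. Since $U$ is dense in $X\supseteq\dom f$, for any $x\in\dom f$ pick a net $(u_i)\subseteq U$ with $u_i\To x$; then continuity of $f$ at $x$ forces $f(u_i)\To f(x)$. (One should note $f(u_i)<+\infty$ automatically since $U\subseteq\dom f$, and $f(x)<+\infty$ since $x\in\dom f$, so this is a genuine convergence of reals.) Thus $U$ is graphically dense in $\dom f$.

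Combining: by Theorem \ref{t2.8}$(b)$ we get $\overline{f_U}=f$ on $X$, and then by Theorem \ref{t2.8}$(a)$,
$$\inf\nolimits_{x\in U}f(x)=\inf\nolimits_{x\in X}\overline{f_U}(x)=\inf\nolimits_{x\in X}f(x),$$
which is the claim. I do not anticipate any real obstacle — the statement is essentially a clean corollary whose only substantive ingredient is the elementary observation that continuity plus density of $U$ yields graphical density of $U$. The only point requiring a little care is bookkeeping with the value $+\infty$ (ensuring $\dom\overline{f_U}$ and $\dom f$ interact correctly), but this is already handled inside the proof of Theorem \ref{t2.8}$(b)$, which we are entitled to invoke.
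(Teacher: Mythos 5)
Your proof is correct and follows essentially the same route as the paper: invoke Theorem \ref{t2.8}$(a)$ for $\inf_{x\in U}f=\inf_X\overline{f_U}$, then use continuity to get graphical density of $U$ in $\dom f$ and conclude $\overline{f_U}=f$ via Theorem \ref{t2.8}$(b)$. You merely spell out the graphical-density step that the paper dismisses as obvious (and note that density of $U$ in $\dom f$, rather than in all of $X$, is what is actually given and all that is needed).
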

\begin{proof} According to Theorem \ref{t2.8} $(a),$ $\inf_{x\in U}f(x)=\inf_{x\in X}\overline{f_U}(x).$ Since $f$ is continuous, obviously it is lower semicontinuous on $X$ and $U$ is graphically dense in $\dom f.$ Hence, by Theorem \ref{t2.8} $(b),$ one has $\overline{f_U}=f$ on $X.$
\end{proof}
However, Example \ref{exp39} below, shows that the continuity assumption of $f$ in the hypothesis of Corollary \ref{c1} is essential and cannot be replaced by lower semicontinuity.
\begin{remark}\rm\label{r36}  According to Theorem \ref{t2.8}, for any proper, lower semicontinuous and convex function $f$, and any dense set $U\subseteq \dom f$ there exists a proper and lower semicontinuous function $\overline{f_U}$ which coincides with $f$ on $U$, dominates $f$ on $X$ and is not equal to $f$, provided $U$ is not graphically dense in $\dom f.$ Moreover, if $U$ is also self-segment-dense, then $\overline{f_U}$ is convex. The general problem, that if two proper, convex and lower semicontinuous functions are equal on a dense set whether they coincide, has been investigated by Benoist and Daniilidis in \cite{BD}, in a Banach space context. According to Proposition 3.4 \cite{BD}, in infinite dimensions the answer is negative, which also follows from our  argument  above. Nevertheless, in finite dimension the answer is affirmative, as follows from Corollary 3.7 \cite{BD}. 
However, there are special type of dense sets in infinite dimensional Hausdorff topological vector spaces, where the coincidence result holds. We show next, that if two proper, convex and lower semicontinuous functions are equal on a segment-dense set of their common domain, then they are equal everywhere.
\end{remark}

\begin{proposition}\label{p36} Let $X$ be a   Hausdorff locally convex topological vector space. Let $f,g:X\to\oR$ be two proper, convex and lower semicontinuous functions. Let $U\subseteq \dom f\cap \dom g$ be a segment-dense set such that $f(u)=g(u)$ for all $u\in U.$ Then, $f=g$ on $\dom f\cap \dom g.$
\end{proposition}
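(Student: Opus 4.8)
The plan is to reduce the statement to a one–dimensional argument along line segments, exploiting that along such a segment the limit superior of a convex function is controlled from above by the endpoint values, while lower semicontinuity controls the limit inferior from below.

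First I would fix an arbitrary point $x\in\dom f\cap\dom g$; recall that this set is convex, being an intersection of convex sets, so that $U$ is segment-dense relative to it. If $x\in U$ there is nothing to prove, so assume $x\notin U$. By the definition of a segment-dense set there is $y\in U$ (necessarily $y\neq x$, since $x\notin U$) such that $x$ is a cluster point of $[x,y]\cap U$; hence there is a net $(u_i)\subseteq([x,y]\cap U)\setminus\{x\}$ with $u_i\to x$. Write $u_i=(1-t_i)x+t_iy$ with $t_i\in(0,1]$. Since the map $t\mapsto(1-t)x+ty$ is a continuous bijection from the compact interval $[0,1]$ onto the Hausdorff space $[x,y]$, it is a homeomorphism, so $u_i\to x$ forces $t_i\to 0$.

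The core of the argument is then immediate. Convexity of $f$ gives $f(u_i)\le(1-t_i)f(x)+t_if(y)$; since $x,y\in\dom f$, the right-hand side is finite and tends to $f(x)$ as $t_i\to 0$, whence $\limsup_i f(u_i)\le f(x)$. On the other hand, lower semicontinuity of $f$ at $x$ yields $\liminf_i f(u_i)\ge f(x)$. Hence $\lim_i f(u_i)=f(x)$, and the identical reasoning applied to $g$ gives $\lim_i g(u_i)=g(x)$. Since $u_i\in U$ for every $i$, we have $f(u_i)=g(u_i)$, so passing to the limit yields $f(x)=g(x)$. As $x$ was an arbitrary point of $\dom f\cap\dom g$, this proves the proposition.

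I do not expect a genuine obstacle. The only delicate points are topological bookkeeping: extracting from the cluster-point hypothesis a net through points other than $x$ that converges to $x$, and the homeomorphism remark that yields $t_i\to 0$ in a general topological vector space (in a normed space this last point is trivial). Everything else is the standard interplay between convexity and lower semicontinuity along a segment; in particular the local convexity of $X$ plays no role in this argument, and only lower semicontinuity and convexity of $f$ and of $g$ at and near the relevant points are used.
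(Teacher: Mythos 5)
Your proof is correct and follows essentially the same route as the paper: reduce to the continuity of a proper convex lower semicontinuous function along a segment approaching an endpoint of its domain, and pass to the limit along a net in $[x,y]\cap U$. The only difference is that the paper quotes this one-dimensional continuity fact from Niculescu--Persson (Proposition 1.3.4), whereas you prove it directly via the $\limsup$/$\liminf$ sandwich between the convexity inequality and lower semicontinuity, which is a welcome self-contained touch.
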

\begin{proof} Let $x_0\in \dom f\cap \dom g.$  Since $U$ is segment-dense in $\dom f\cap \dom g$, there exists $u_0\in U$ such that $x_0$ is a cluster point of the set $[u_0,x_0]\cap U.$ We show that $f(x_0)=g(x_0).$ Indeed, by the convexity and lower semicontinuity of $f$ and $g$, (in virtue Proposition 1.3.4, \cite{NP}) of one has that for every sequence $(x_n)\subseteq [u_0,x_0]$ converging to $x_0$, it holds $$f(x_0)=\lim_{x_n\To x_0}f(x_n)\mbox{  and }g(x_0)=\lim_{x_n\To x_0}g(x_n).$$
But, then for $(u_n)\subseteq [u_0,x_0]\cap U$ converging to $x_0,$ we get
$$f(x_0)=\lim_{u_n\To x_0}f(u_n)=\lim_{u_n\To x_0}g(u_n)=g(x_0).$$
\end{proof}

\begin{corollary}\label{c3} Let $X$ be a   Hausdorff locally convex topological vector space, let $f:X\to\oR$ be a proper, convex and lower semicontinuous function and let $U\subseteq \dom f$ be a segment-dense set in $\dom f.$ Then, $f=\overline{f_U}$ on $X$ and $\inf_{x\in U}f(x)=\inf_{x\in X}f(x).$
\end{corollary}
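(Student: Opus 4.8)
The plan is to derive the statement from Theorem \ref{t2.8} by verifying that a segment-dense subset of $\dom f$ is automatically graphically dense in $\dom f$; the mechanism is the same one-dimensional continuity argument used in the proof of Proposition \ref{p36}. First I would record the easy part. Since $f$ is lower semicontinuous on $X$, it is in particular lower semicontinuous on $U$, so Theorem \ref{t2.8}$(b)$ applies and gives $\dom\overline{f_U}\subseteq\dom f$, $\overline{f_U}(x)\ge f(x)$ for all $x\in X$, together with the equivalence: $\overline{f_U}=f$ on $X$ if and only if $U$ is graphically dense in $\dom f$. Moreover Theorem \ref{t2.8}$(a)$ yields $\inf_{x\in U}f(x)=\inf_{x\in X}\overline{f_U}(x)$. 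Hence both conclusions of the corollary follow simultaneously once I show that $U$ is graphically dense in $\dom f$.

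To prove graphical density, fix $x_0\in\dom f$. Since $U$ is segment-dense in $\dom f$, there is $u_0\in U$ such that $x_0$ is a cluster point of the set $[u_0,x_0]\cap U$. Because $\dom f$ is convex and $u_0,x_0\in\dom f$, the whole segment $[u_0,x_0]$ is contained in $\dom f$, so the function $\varphi:[0,1]\To\R$, $\varphi(t)=f((1-t)u_0+tx_0)$, is finite, and it is convex and lower semicontinuous on $[0,1]$. By Proposition 1.3.4 of \cite{NP} (exactly as invoked in the proof of Proposition \ref{p36}) $\varphi$ is continuous on $[0,1]$; equivalently, $f$ restricted to the segment $[u_0,x_0]$ is continuous at $x_0$. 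Now, $x_0$ being a cluster point of $[u_0,x_0]\cap U$ supplies a net $(u_i)\subseteq[u_0,x_0]\cap U$ with $u_i\To x_0$, and continuity of $f$ along the segment gives $f(u_i)\To f(x_0)$. As $x_0\in\dom f$ was arbitrary, $U$ is graphically dense in $\dom f$, and the reduction of the first paragraph finishes the proof.

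The one point that deserves care is that one cannot simply quote Proposition \ref{p36} with $g=\overline{f_U}$: although $\overline{f_U}$ is proper and lower semicontinuous, and agrees with $f$ on $U$ by Theorem \ref{t2.8}$(a)$--$(b)$, its convexity is \emph{not} available here, since Theorem \ref{t2.8}$(c)$ requires the stronger self-segment-dense hypothesis rather than mere segment-denseness. This is precisely why I prefer to establish graphical density directly instead of passing through a coincidence-of-convex-functions argument. Apart from this, the only ingredients are the passage from the cluster-point condition to a convergent net in $U$ and the one-dimensional ``convex $+$ lower semicontinuous $\Rightarrow$ continuous'' fact, both of which are routine and are already the heart of the proof of Proposition \ref{p36}; in effect Corollary \ref{c3} is that proposition restated in the language of lower semicontinuous hulls.
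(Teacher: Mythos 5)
Your proposal is correct and follows essentially the same route as the paper: the paper's proof also reduces Corollary \ref{c3} to showing that a segment-dense set is graphically dense in $\dom f$ (citing the same continuity-along-segments argument from the proof of Proposition \ref{p36}) and then invokes Theorem \ref{t2.8}(a) and (b). You merely write out the details the paper leaves implicit, and your cautionary remark about why one cannot directly apply Proposition \ref{p36} to $g=\overline{f_U}$ is a sound observation consistent with what the paper actually does.
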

\begin{proof}  Using the same arguments as in the proof of Proposition \ref{p36}, one can easily show that  $U$ is graphically dense in $\dom f.$ Hence, according to Theorem \ref{t2.8} (b), $f=\overline{f_U}$ on $X.$ But Theorem \ref{t2.8} also shows that $\inf_{x\in U}f(x)=\inf_{x\in X}\overline{f_U}(x).$
\end{proof}

\begin{remark}\rm Note that the conclusion of Corollary \ref{c3} fails if we assume that $f:X\to\oR$ is convex only on $U.$ Moreover  $\dom \overline{f_U}$ might not be closed even when $\dom f$ is compact and $f$ is lower semicontinuous as the following simple example shows.
\end{remark}
\begin{example}\rm Consider the function $$f:\R\To\oR,\,f(x)=\left\{\begin{array}{lll} \frac{1}{x},\mbox{ if }x\in ]0,1]\\0,\mbox{ if }x=0\\+\infty,\mbox{ otherwise.}\\ \end{array}\right.$$
Then, $\dom f=[0,1]$ is compact and $f$ is convex on $U=]0,1]$ and  lower semicontinuous  on $\dom f.$ Further $U$ is segment-dense (and also self-segment-dense), in $\dom f.$ But, $1=\inf_{x\in U}f(x)\neq\inf_{x\in X}f(x)=0.$
On the other hand, $\overline{f_U}(x)=\left\{\begin{array}{ll} \frac{1}{x},\mbox{ if }x\in ]0,1]\\+\infty,\mbox{ otherwise}\\ \end{array}\right.$, hence $f\neq \overline{f_U}$ on $\R.$

It can easily be observed that $\dom\overline{f_U}=]0,1]$ which is not closed.
\end{example}
Next we provide a general  coincidence result, involving self-segment-dense sets, in infinite dimension.

\begin{proposition}\label{p38} Let $X$ be a   Hausdorff locally convex topological vector space. Let $f:X\to\oR$ be a proper, convex and lower semicontinuous function. Let $U\subseteq \dom f$ be a self-segment-dense set. Then, $f=\overline{f_U}$ on $\co(U)$ and $\overline{f_U}=\overline{f_{\co(U)}}$ on $X.$
\end{proposition}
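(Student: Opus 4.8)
The plan is to establish the two assertions in turn; the first, $f=\overline{f_U}$ on $\co(U)$, carries all the difficulty, while the second will follow from it by a short epigraph computation. First observe that since $f$ is convex, $\dom f$ is convex, so the hypothesis that $U$ is self-segment-dense (with base set $\dom f$) is meaningful. Since $f$ is lower semicontinuous on $X$, it is in particular lower semicontinuous on $U$, so Theorem \ref{t2.8}(b) gives $\overline{f_U}(x)\ge f(x)$ for every $x\in X$. Hence, to prove $f=\overline{f_U}$ on $\co(U)$ it suffices to establish the reverse inequality $\overline{f_U}(x)\le f(x)$ for each $x\in\co(U)$; equivalently, that $(x,f(x))\in\cl(\epi f_U)=\epi\overline{f_U}$.

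To do this I would fix $x\in\co(U)$ and write it as a \emph{strict} convex combination $x=\sum_{i=1}^n\lambda_i u_i$, with $u_i\in U$, $\lambda_i>0$, $\sum_{i=1}^n\lambda_i=1$ (simply discard any zero coefficients). Set $S:=\co\{u_1,\dots,u_n\}$. Two facts combine. First, a convex combination of $u_1,\dots,u_n$ with all coefficients strictly positive lies in the relative interior of $S$, so $x$ is a relative interior point of $S$; moreover $S\subseteq\co(U)\subseteq\dom f$, so $f|_S$ is a \emph{finite} convex function on the finite-dimensional convex set $S$, hence continuous on the relative interior of $S$, in particular at $x$ (relative to $S$). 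Second, because $U$ is self-segment-dense in $\dom f$, Lemma \ref{l25} yields $\cl(S\cap U)=S$, so $x\in\cl(S\cap U)$ and there is a net $(u_\alpha)\subseteq S\cap U$ with $u_\alpha\to x$. Continuity of $f|_S$ at $x$ then forces $f(u_\alpha)\to f(x)$. Since $u_\alpha\in U\cap\dom f$, the points $(u_\alpha,f(u_\alpha))$ lie in $\epi f_U$ and converge to $(x,f(x))$, whence $(x,f(x))\in\cl(\epi f_U)=\epi\overline{f_U}$, i.e. $\overline{f_U}(x)\le f(x)$. Together with the inequality from Theorem \ref{t2.8}(b) this gives $\overline{f_U}(x)=f(x)$, and since $x\in\co(U)$ was arbitrary, $f=\overline{f_U}$ on $\co(U)$.

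For the second assertion I would argue entirely at the level of epigraphs. From $U\subseteq\co(U)$ we get $\epi f_U\subseteq\epi f_{\co(U)}$, hence $\epi\overline{f_U}=\cl(\epi f_U)\subseteq\cl(\epi f_{\co(U)})=\epi\overline{f_{\co(U)}}$. Conversely, if $(v,r)\in\epi f_{\co(U)}$ then $v\in\co(U)$ and $f(v)\le r$, so by the first part $\overline{f_U}(v)=f(v)\le r$, i.e. $(v,r)\in\epi\overline{f_U}$; thus $\epi f_{\co(U)}\subseteq\epi\overline{f_U}$, and since $\epi\overline{f_U}$ is closed, $\epi\overline{f_{\co(U)}}=\cl(\epi f_{\co(U)})\subseteq\epi\overline{f_U}$. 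The two inclusions give $\epi\overline{f_U}=\epi\overline{f_{\co(U)}}$, that is, $\overline{f_U}=\overline{f_{\co(U)}}$ on $X$.

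The one delicate point is the middle step: approximating an arbitrary point of $\co(U)$ by points of $U$ along which the values of $f$ converge to the right limit. This is exactly where the self-segment-density hypothesis is indispensable, via Lemma \ref{l25}, which guarantees that $S\cap U$ is dense in the simplex $S$; for a merely dense set $U$ the trace $S\cap U$ can be far from dense (even empty), and then, as Example \ref{ex2} shows, $\overline{f_U}$ need not even be convex, let alone agree with $f$ on $\co(U)$. Apart from this, the proof uses only two standard finite-dimensional facts about $S$ — that a finite convex function is continuous on the relative interior of its (finite-dimensional) domain, and that the relative interior of a polytope consists of the strictly positive convex combinations of its spanning points — together with the defining relation $\epi\overline{f_U}=\cl(\epi f_U)$ and Theorem \ref{t2.8}(b).
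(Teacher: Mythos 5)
Your proof is correct, and it reaches both conclusions by a route that differs from the paper's in its key steps, even though the opening reduction is the same. Like the paper, you write $x=\sum_{i=1}^n\lambda_i u_i$ and use Lemma \ref{l25} to get density of $S\cap U$ in the simplex $S=\co\{u_1,\dots,u_n\}$; but where the paper then builds an auxiliary function $\tilde f$ on $\lin\{u_1,\dots,u_n\}$ and invokes the finite-dimensional coincidence theorem of Benoist--Daniilidis (Corollary 3.7 of \cite{BD}) to identify $\tilde f$ with $\overline{f_U}$ on $S$, you prove the needed inequality $\overline{f_U}(x)\le f(x)$ directly, by placing $x$ in the relative interior of $S$ (via the strictly positive coefficients) and using continuity of the finite convex function $f|_S$ there. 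This is self-contained and, as a bonus, avoids having to know in advance that $\overline{f_U}$ is convex and lower semicontinuous on the finite-dimensional slice, which the appeal to \cite{BD} implicitly requires (and which the paper supplies through Theorem \ref{t2.8}(c)). For the second assertion the paper argues through graphical density: $U$ is graphically dense in $\dom\overline{f_U}$ by Theorem \ref{t2.8}(a), hence so is $\co(U)$, and Theorem \ref{t2.8}(b) yields $\overline{g_{\co(U)}}=g$ for $g=\overline{f_U}$; your two-inclusion epigraph computation, using $\epi f_U\subseteq\epi f_{\co(U)}$ on one side and part one plus closedness of $\epi\overline{f_U}$ on the other, is shorter and equally valid. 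One small point worth making explicit: continuity of $f|_S$ ``at $x$ relative to $S$'' (i.e.\ along nets from all of $S$, not merely from $\mathrm{ri}(S)$) is needed because the approximating net lies in $S\cap U$; this is harmless, since any point of $S$ sufficiently close to $x$ in $\aff(S)$ already lies in $\mathrm{ri}(S)$, where $f$ is locally Lipschitz.
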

\begin{proof} Obviously $\co(U)\subseteq\dom\overline{f_U}.$ Let $x_0\in \co(U).$ Then $x_0=\sum_{i=1}^n\l_iu_i$ for some $u_1,...,u_n\in U$ and $\l_1,...,\l_n\in[0,1]$ with $\sum_{i=1}^n\l_i=1.$ Hence, $x_0\in\co\{u_1,...,u_n\}.$ Let $Y=\lin\{u_1,...,u_n\}$ and consider the function
$$\tilde{f}:Y\To\oR,\,\tilde{f}(x)=\left\{\begin{array}{ll} f(x),\mbox{ if }x\in\co\{u_1,...,u_n\}\\+\infty,\mbox{ otherwise.} \end{array}\right.$$
Then, $\tilde{f}$ is proper convex and lower semicontinuous, and in virtue of Theorem \ref{t2.8}, $\tilde{f}(u)=f(u)=\overline{f_U}(u)$  for all $u\in\co\{u_1,...,u_n\}\cap U.$
Obviously $Y$ is finite dimensional and according  to Lemma \ref{l25}, $\co\{u_1,...,u_n\}\cap U$ is dense, (actually is self-segment-dense), in $\co\{u_1,...,u_n\},$ hence according to Corollary 3.7 \cite{BD}, $\tilde{f}=\overline{f_U}$ on $\co\{u_1,...,u_n\}.$ In particular $f(x_0)=\overline{f_U}(x_0).$ Since $x_0\in\co(U)$ was arbitrary chosen, it follows that $f=\overline{f_U}$ on $\co(U).$

Let us denote $g=\overline{f_U}.$ According to Theorem \ref{t2.8} (a), $U$ is graphically dense in $\dom g$, hence $\co(U)$ is also graphically dense in $\dom g.$ By Theorem \ref{t2.8} (b), one has $\overline{g_{\co(U)}}=g$ on $X.$ But $g_{\co(U)}=f_{\co(U)}$ and the conclusion follows.
\end{proof}

\begin{remark}\rm According to Proposition \ref{p38}, if two proper convex and lower semicontinuous functions $f$ and $g$ coincide on a self-segment-dense set $U$ of their domain, then they also coincide on $\co(U).$ Indeed, in this case $\overline{f_U}=\overline{g_U}$, hence $f=\overline{f_U}=\overline{g_U}=g$ on $\co(U).$
\end{remark}

\begin{remark}\rm\label{r311} Note that under the hypothesis of Proposition \ref{p38}, one has\\ $\inf_{x\in U}f(x)=\inf_{x\in \co( U)}f(x).$ Indeed, according to Proposition \ref{p38} one has
$f=\overline{f_U}$ on $\co(U).$ On the other hand, according to Theorem \ref{t2.8} one has $\inf_{x\in U}f(x)=\inf_{x\in X}\overline{f_U}(x).$
Therefore, the following  inequalities $$\inf_{x\in U}f(x)\ge \inf_{x\in \co(U)}f(x)=\inf_{x\in \co(U)}\overline{f_U}(x)\ge \inf_{x\in X}\overline{f_U}(x)=\inf_{x\in U}f(x)$$
must hold with equality everywhere.

Moreover, if $\co(U)$ is closed then $f=\overline{f_U}$ on $X$, because $$U\subseteq \co(U)\subseteq\dom \overline{f_U}\subseteq \dom f\subseteq \cl(U)\subseteq\overline{\co}(U)=\co(U).$$ Hence, in this case,  $$\inf_{x\in U}f(x)=\inf_{x\in X}f(x).\,\,\,\,\,\,(*)$$
\end{remark}

The next example shows that $(*)$ fails even if $\co(U)$ is compact,  when we only assume  that $U$ is dense in $\dom f.$ It also shows, that the continuity assumption on $f$  in Corollary \ref{c1}, the segment-dense property of $U$ in Corolary \ref{c3} and the self-segment-denseness of $U$ in Proposition \ref{p38} are essential.
 \begin{example}\rm\label{exp39} Let  $X$ be an infinite dimensional real Hilbert space. Let $K=\left\{ x\in X:\left\Vert x\right\Vert \le 1\right\}$ be the unit ball of $X$ and let $U=\left\{ x\in X:\left\Vert x\right\Vert =1\right\}$. Then according to Example \ref{ex1}, $U$ is dense in $K$ with respect to the weak topology of $X,$ but is neither segment-dense not self-segment-dense in $K.$ Obviously $\co(U)=K$, which according to Banach-Alaoglu Theorem \cite{fabian} is weakly compact. Consider the function $$f:X\To\R,\,f(x)=\left\{\begin{array}{ll}\|x\|,\mbox{ if }x\in K\\+\infty,\mbox{ otherwise. }\end{array}\right.$$ Then, obviously $f$ is proper, convex and weakly lower semicontinuous and $\dom f=K.$ Nevertheless
$$\inf_{x\in U}f(x)=1$$
and
$$\min_{x\in X}f(x)=0.$$

It is also obvious the fact, that in this case $f\neq\overline{f_U}$ on $\co(U)$, since  $$\overline{f_U}(x)=\left\{\begin{array}{ll} 1,\mbox{ if }x\in K\\+\infty,\mbox{ otherwise. }\end{array}\right.$$
\end{example}

Next we present some other interesting conditions that ensure the validity of $(*)$.

\begin{proposition}\label{p312}  Let $X$ be a   Hausdorff locally convex topological vector space. Let $f:X\To \oR$ be a proper, convex and lower semicontinuous function, and let $U\subseteq \dom f$ be self-segment-dense in $\dom f.$ Assume further that $\co(U)$ is segment-dense in $\dom f.$
Then, $\overline{f_U}=\overline{f_{\co(U)}}=f$ on $X$ and $\inf_{x\in U}f(x)=\inf_{x\in X}f(x).$
\end{proposition}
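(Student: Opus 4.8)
The plan is to chain together the two coincidence results already available: Proposition \ref{p38} handles $U$ versus $\co(U)$, and Corollary \ref{c3} handles a segment-dense set versus the whole space. First I would invoke Proposition \ref{p38}: since $f$ is proper, convex and lower semicontinuous and $U$ is self-segment-dense in $\dom f$, we obtain $f=\overline{f_U}$ on $\co(U)$ and $\overline{f_U}=\overline{f_{\co(U)}}$ on $X$. In particular the function $\overline{f_{\co(U)}}$ agrees with $f$ on $\co(U)$, because for $x\in\co(U)$ we have $\overline{f_{\co(U)}}(x)=\overline{f_U}(x)=f(x)$.

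Next I would apply Corollary \ref{c3} with the roles: the ambient proper, convex, lower semicontinuous function is $f$ itself, and the distinguished subset is $\co(U)$, which by hypothesis is segment-dense in $\dom f$. Note $\co(U)\subseteq\dom f$ since $U\subseteq\dom f$ and $\dom f$ is convex. Corollary \ref{c3} then yields $f=\overline{f_{\co(U)}}$ on $X$ and $\inf_{x\in\co(U)}f(x)=\inf_{x\in X}f(x)$. Combining with the previous paragraph, $\overline{f_U}=\overline{f_{\co(U)}}=f$ on $X$, which is the first assertion.

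For the infimum identity, I would observe that Remark \ref{r311} (which holds under the hypotheses of Proposition \ref{p38}, already in force here) gives $\inf_{x\in U}f(x)=\inf_{x\in\co(U)}f(x)$; alternatively this follows directly from $f=\overline{f_U}$ on $\co(U)$ together with Theorem \ref{t2.8}(a), exactly as in the displayed chain of inequalities in Remark \ref{r311}. Chaining this with $\inf_{x\in\co(U)}f(x)=\inf_{x\in X}f(x)$ from Corollary \ref{c3} delivers $\inf_{x\in U}f(x)=\inf_{x\in X}f(x)$.

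I do not anticipate a serious obstacle here — the proposition is essentially a composition of two earlier results — but the one point requiring a little care is verifying that the hypotheses of Corollary \ref{c3} are genuinely met by the pair $(f,\co(U))$: one must check $\co(U)\subseteq\dom f$ (immediate from convexity of $\dom f$) and that "segment-dense in $\dom f$" is exactly the hypothesis Corollary \ref{c3} needs, which it is by inspection. A secondary subtlety is making sure the two "$\overline{f_U}=\overline{f_{\co(U)}}$" statements refer to the same object; this is guaranteed since $f_{\co(U)}$ depends only on $f$ and the set $\co(U)$, not on how that set was obtained.
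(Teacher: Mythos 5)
Your proposal is correct and follows essentially the same route as the paper: Proposition \ref{p38} gives $\overline{f_U}=\overline{f_{\co(U)}}$ on $X$, Corollary \ref{c3} applied to the segment-dense set $\co(U)$ gives $\overline{f_{\co(U)}}=f$ on $X$, and Theorem \ref{t2.8}(a) (equivalently, your Remark \ref{r311} route) yields the infimum identity. The extra care you take in checking $\co(U)\subseteq\dom f$ and the well-definedness of $\overline{f_{\co(U)}}$ is sound but not needed beyond what the paper already implicitly uses.
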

\begin{proof}
 According to Proposition \ref{p38},  $\overline{f_U}=\overline{f_{\co(U)}}$ on $X$.
According to Corollary \ref{c3}, $\overline{f_{\co(U)}}=f$ on $X.$ Hence, $\overline{f_U}=\overline{f_{\co(U)}}=f$ on $X$.
From Theorem \ref{t2.8} (a), one has $\inf_{x\in U}f(x)=\inf_{x\in X}\overline{f_U}(x),$ consequently $\inf_{x\in U}f(x)=\inf_{x\in X}f(x).$
\end{proof}

\begin{remark}\rm\label{r313} Note that each of the following  conditions  ensure that $\co(U)$ is segment-dense in $\dom f.$
\begin{itemize}
\item[(a)] $\mbox{For all }x\in \dom f\mbox{ there exists }y\in\co(U),\mbox{ such that }[y,x)\subseteq\co(U).$
\item[(b)] The interior of $\co(U)$ is non-empty. In this case it is known that the following relation, sometimes called line  segment principle \cite{M,RW}, holds
\begin{equation}\label{e6}
\l\inte( \co (U)) +(1-\l)\overline{\co}(U) \subseteq\inte( \co( U)) ,\,\forall\l\in(0,1].
\end{equation}
\item[(c)] $X$ is finite dimensional. In this case $\co (U)$ has nonempty relative interior, and a similar relation to (\ref{e6}) holds.
\end{itemize}
\end{remark}
\begin{remark}\rm\label{r312} If $X$ is a Banach space then, according to Mazur Theorem \cite{fabian}, the weak and strong closure of a convex set coincides.
Therefore, if we endow $X$ with the weak topology, in  (\ref{e6}) it is enough to assume  that the strong interior of $\co (U)$ is nonempty.
\end{remark}
\begin{remark}\rm Example \ref{exp39} shows that also in hypothesis of Proposition \ref{p312} the self-segment-dense assumption on $U$ cannot be replaced by the usual denseness assumption. Indeed, for the sets  $X, K, U$ and the function $f$ considered  Example \ref{exp39}  all the assumptions in the hypothesis of Proposition \ref{p312} are fulfilled excepting $U$ is self-segment-dense. Here $U$ is only dense. Since $\co(U)=K=\dom f,$ obviously $\co(U)$ is segment-dense in $\dom f.$ Nevertheless $$1=\inf_{x\in U}f(x)>0=\min_{x\in X}f(x).$$
\end{remark}

\section{Minimax results on dense sets}

In this section we prove several minimax theorems on dense sets. We obtain  some results  where the conditions imposed to the bifunction that describes the minimax problem are considered relative to a dense set. Several examples and counterexamples circumscribe the results of this section. In particular we show that the general minimax results of Fan and Sion cannot be extended to usual dense sets.

Based on the results of Section 3, we are able to give a proof for the following minimax theorem, an extension of Fan's minimax result on self-segment-dense sets.

\begin{theorem}\label{tm2} Let $K$ be a  nonempty, compact and convex subset of the Hausdorff locally convex topological vector space $X$ and let $Y$ be an arbitrary  nonempty set. Let $U\subseteq K$ be a self-segment-dense set in $K$ and assume that $\co(U)$ is segment-dense in $K.$ Consider further the mapping $f:K\times Y\To\R,$ and assume  that the following assumptions are fulfilled.
\begin{itemize}
\item[(i)] The map $x\To f(x,y)$ is proper, convex and lower semicontinuous on $K$ for all $y\in Y$.
\item[(ii)] The map $y\To f(x,y)$ is concavelike, for all $x\in  K.$
\end{itemize}
Then,
$$\inf_{x\in  U}\sup_{y\in Y}f(x,y)=\sup_{y\in Y}\inf_{x\in U}f(x,y).$$
\end{theorem}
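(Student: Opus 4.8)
The plan is to reduce Theorem \ref{tm2} to the classical Fan minimax theorem (Theorem \ref{fan}) applied on the compact convex set $K$, and then to transfer the value of the game from $K$ back to the self-segment-dense set $U$ using the machinery of Section 3. First I would check that Fan's theorem applies to $f$ on $K\times Y$: assumption (i) gives that $x\mapsto f(x,y)$ is convex and lower semicontinuous on the compact set $K$ (so in particular convexlike in the first variable, as noted after the definition of convexlike), and assumption (ii) is exactly the concavelike hypothesis in the second variable. Hence
$$\min_{x\in K}\sup_{y\in Y}f(x,y)=\sup_{y\in Y}\min_{x\in K}f(x,y).$$
So it suffices to prove the two equalities
$$\inf_{x\in U}\sup_{y\in Y}f(x,y)=\min_{x\in K}\sup_{y\in Y}f(x,y)\quad\text{and}\quad\sup_{y\in Y}\inf_{x\in U}f(x,y)=\sup_{y\in Y}\min_{x\in K}f(x,y).$$

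For the right-hand equality, I would argue termwise in $y$. Fix $y\in Y$. The function $x\mapsto f(x,y)$ is proper, convex and lower semicontinuous on $X$ (extending by $+\infty$ off $K$, with $\dom f(\cdot,y)\subseteq K$), and $U$ is self-segment-dense in $\dom f(\cdot,y)$ — here one must note $\dom f(\cdot,y)$ is a closed convex subset of $K$ containing $U$, so $U$ self-segment-dense in $K$ forces $U$ self-segment-dense in $\dom f(\cdot,y)$ by Lemma \ref{l26} applied with $\mathcal S=U$ (giving $\cl(\co(U)\cap U)=\overline{\co}(U)$, i.e.\ $\cl(U)=\dom f(\cdot,y)$, and the segment condition is inherited). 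Since $\co(U)$ is segment-dense in $K$ it is segment-dense in $\dom f(\cdot,y)$ as well. Thus Proposition \ref{p312} applies to $f(\cdot,y)$ and yields $\inf_{x\in U}f(x,y)=\inf_{x\in X}f(x,y)=\min_{x\in K}f(x,y)$. Taking $\sup$ over $y$ gives the right-hand equality.

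For the left-hand equality, I would apply the same circle of ideas to the single function $g(x):=\sup_{y\in Y}f(x,y)$. The only point needing care is that $g$ must be proper, convex and lower semicontinuous on $X$ with $U$ self-segment-dense in $\dom g$: convexity and lower semicontinuity are immediate since $g$ is a pointwise supremum of the convex, l.s.c.\ functions $f(\cdot,y)$; properness follows because $g$ is finite on $\dom g\supseteq U$ once we know $\min_{x\in K}g(x)$ is finite (which it is, being the finite minimax value — alternatively one treats the $-\infty$ case separately and trivially); and $\dom g\subseteq K$ is closed and convex, so again $U$ is self-segment-dense in $\dom g$ and $\co(U)$ is segment-dense in $\dom g$. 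Proposition \ref{p312} then gives $\inf_{x\in U}g(x)=\min_{x\in K}g(x)$, which is precisely the left-hand equality. Combining the three displayed equalities finishes the proof.

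The main obstacle I anticipate is the bookkeeping around domains: Propositions \ref{p38} and \ref{p312} are stated with $U$ self-segment-dense in $\dom f$, not in the ambient compact set $K$, so one genuinely needs to verify that self-segment-denseness and the segment-denseness of $\co(U)$ descend from $K$ to the (possibly strictly smaller, but still closed and convex) effective domains $\dom f(\cdot,y)$ and $\dom g$ — this is where Lemma \ref{l26} and the inheritance of the segment condition along subsegments do the work. A secondary subtlety is ensuring $g=\sup_y f(\cdot,y)$ is proper rather than identically $+\infty$ on part of $K$; this is handled by the finiteness of the minimax value obtained from Fan's theorem, or dispatched directly in the degenerate case.
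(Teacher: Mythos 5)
Your proposal reproduces the paper's own three-step argument: Proposition \ref{p312} applied to each $f(\cdot,y)$ gives $\sup_{y}\inf_{U}f=\sup_{y}\min_{K}f$, Fan's theorem on the compact convex set $K$ gives $\sup_{y}\min_{K}f=\min_{K}\sup_{y}f$, and Proposition \ref{p312} applied to $g=\sup_{y}f(\cdot,y)$ gives $\min_{K}g=\inf_{U}g$. The first of these applications is cleaner than you make it: since $f$ is real-valued on $K\times Y$, the effective domain of $f(\cdot,y)$ (extended by $+\infty$ off $K$) is exactly $K$, so no descent to a smaller closed convex domain is needed there.

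The genuine problem is the step you flagged as a ``secondary subtlety'' and then dismissed: the application of Proposition \ref{p312} to $g$. That proposition requires $U\subseteq\dom g$ with $U$ self-segment-dense (and $\co(U)$ segment-dense) in $\dom g$, and neither the finiteness of the minimax value nor the properness of $g$ gives you $U\subseteq\dom g$; the hypotheses of the theorem do not force it either. Concretely, take $X=\R$, $K=[0,1]$, $U=\mathbb{Q}\cap[0,1]$, $Y=\R$, $\alpha\in(0,1)$ irrational, and $f(x,y)=y(x-\alpha)$. All hypotheses hold ($U$ is self-segment-dense in $K$, $\co(U)=[0,1]$, and $f$ is affine and continuous in each variable, hence convex and lower semicontinuous in $x$ and concavelike in $y$), yet $g(x)=\sup_{y\in\R}y(x-\alpha)$ equals $+\infty$ for every $x\ne\alpha$, so $\dom g=\{\alpha\}$ is disjoint from $U$ and
$$\inf_{x\in U}\sup_{y\in Y}f(x,y)=+\infty\ne 0=\sup_{y\in Y}\inf_{x\in U}f(x,y).$$
So this step genuinely fails, and with it the statement as given; the paper's own proof makes the same unjustified appeal to Proposition \ref{p312} for $g$, only silently. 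To repair the argument one needs an extra hypothesis guaranteeing $\dom g=K$ (or at least that $U$ is self-segment-dense and $\co(U)$ segment-dense in $\dom g$), for instance condition (iv) of Theorem \ref{tm1}, namely $\sup_{y\in Y}f(x,y)<\infty$ for all $x\in K$; with that added, your argument (and the paper's) goes through verbatim.
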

\begin{proof} In virtue of Proposition \ref{p312},  one has $$\sup_{y\in Y}\inf_{x\in U}f(x,y)= \sup_{y\in Y}\min_{x\in K}f(x,y).$$ Theorem \ref{fan} assures that $$\sup_{y\in Y}\min_{x\in K}f(x,y)=\min_{x\in K}\sup_{y\in Y}f(x,y).$$
On  the other hand, the function $g:K\To\oR,\,g(x)=\sup_{y\in Y}f(x,y)$ is proper, convex and lower semicontinuous as a pointwise supremum of a family of proper, convex and lower semicontinuous functions, hence by Proposition \ref{p312} one obtains
$$\min_{x\in K}\sup_{y\in Y}f(x,y)=\inf_{x\in U}\sup_{y\in Y}f(x,y).$$

Thus, we have
$$ \sup_{y\in Y}\inf_{x\in U}f(x,y)= \sup_{y\in Y}\min_{x\in K}f(x,y)=\min_{x\in K}\sup_{y\in Y}f(x,y)=\inf_{x\in U}\sup_{y\in Y}f(x,y),$$ and the conclusion follows.
\end{proof}
\begin{remark} Note that, in case $Y$ is a convex subset of a topological vector space, then  (ii) can be replaced by the following:
the map $y\To f(x,y)$ is upper semicontinuous and quasiconcave, for all $x\in  K.$ Then, the conclusion of Theorem \ref{tm2} follows by using Theorem \ref{sion} instead of Theorem \ref{fan} in its proof. The same argument is valid in every result presented bellow.
\end{remark}

\begin{remark}\rm
 For every fixed $y\in Y$, we denote by $\overline{f_{U}}(\cdot,y)$, the function defined as $\epi\overline{f_{U}}(\cdot,y)=\cl(\epi_{U}f(\cdot,y)).$ If one obtains a condition that assures the domain of $\overline{f_U}(\cdot,y)$ being the same closed subset of the compact set $K$ for every $y\in Y$, in particular $\dom\overline{f_U}(\cdot,y)=K$ for every $y\in Y$, then one  can assume that the conditions $(i)$ and $(ii)$  in Theorem \ref{tm2} are fulfilled only on $U.$  More precisely the following result holds.
 \end{remark}
 \begin{theorem}\label{tm3}
 Let $K$ be a  nonempty, compact and convex subset of the Hausdorff locally convex topological vector space $X$ and let $Y$ be an arbitrary  nonempty set. Let $U\subseteq K$ be a self-segment-dense set in $K$ and assume that $\co( U)$ is segment-dense in $K.$ Consider further the mapping $f:K\times Y\To\R,$ and assume  that the following assumptions are fulfilled.
\begin{itemize}
\item[(i)] The map $x\To f(x,y)$ is proper, convex and lower semicontinuous on $U$ for all $y\in Y$.
\item[(ii)] The map $y\To f(x,y)$ is concavelike, for all $x\in  U.$
\item[(iii)] For every $y\in Y$, $\dom \overline{f_U}(\cdot,y)=K.$
\end{itemize}
Then,
$$\inf_{x\in  U}\sup_{y\in Y}f(x,y)=\sup_{y\in Y}\inf_{x\in U}f(x,y).$$
\end{theorem}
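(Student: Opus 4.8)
The plan is to reduce Theorem \ref{tm3} to Theorem \ref{tm2} by replacing, for each fixed $y$, the slice $f(\cdot,y)$ by its lower semicontinuous hull relative to $U$. For $y\in Y$, regard $f(\cdot,y)$ as a function $X\To\oR$ equal to $+\infty$ off $K$, so that $\dom f(\cdot,y)=K$, and let $\overline{f_{U}}(\cdot,y)$ be as in the Remark preceding the theorem, i.e. $\epi\overline{f_{U}}(\cdot,y)=\cl(\epi_{U} f(\cdot,y))$. Since $\dom f(\cdot,y)=K$ is convex, $U$ is self-segment-dense in $K$, and $f(\cdot,y)$ is convex and lower semicontinuous on $U$ by (i), Theorem \ref{t2.8} shows that $\overline{f_{U}}(\cdot,y)$ is convex and lower semicontinuous on $X$ and that $\overline{f_{U}}(u,y)=f(u,y)$ for every $u\in U$; moreover $\epi\overline{f_{U}}(\cdot,y)\subseteq K\times\R$ (because $K$ is compact, hence closed), so $\dom\overline{f_{U}}(\cdot,y)\subseteq K$, and hypothesis (iii) upgrades this to $\dom\overline{f_{U}}(\cdot,y)=K$. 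I would also check that $\overline{f_{U}}(\cdot,y)$ is proper: were it equal to $-\infty$ at some $\bar x\in K$, convexity would force it to be $-\infty$ on every open segment $]u,\bar x[$ with $u\in U$, this union is dense in $K$, and lower semicontinuity would then force $\overline{f_{U}}(\cdot,y)\equiv-\infty$ on $K$, contradicting $\overline{f_{U}}(u,y)=f(u,y)\in\R$. Consequently $g:K\times Y\To\R$, $g(x,y):=\overline{f_{U}}(x,y)$, is a well-defined finite-valued bifunction with $g=f$ on $U\times Y$, and $x\To g(x,y)$ is proper, convex and lower semicontinuous on $K$ for each $y$.

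The crucial step is to show that $y\To g(x,y)$ is concavelike for \emph{every} $x\in K$ (for $x\in U$ this is immediate from (ii), since there $g=f$). For this I would use that the epigraph--closure construction gives $\overline{f_{U}}(x,y)=\liminf_{U\ni u\To x}f(u,y)$. Fix $x\in K$, $y_1,y_2\in Y$, $t\in[0,1]$, and choose by (ii) some $y_3\in Y$ with $f(u,y_3)\ge(1-t)f(u,y_1)+tf(u,y_2)$ for all $u\in U$. Then for any net $(u_i)\subseteq U$ with $u_i\To x$,
$$\liminf_i f(u_i,y_3)\ \ge\ \liminf_i\big[(1-t)f(u_i,y_1)+tf(u_i,y_2)\big]\ \ge\ (1-t)g(x,y_1)+tg(x,y_2),$$
by superadditivity of $\liminf$ and $\liminf_i f(u_i,y_j)\ge\overline{f_{U}}(x,y_j)=g(x,y_j)$; taking the infimum over all such nets yields $g(x,y_3)\ge(1-t)g(x,y_1)+tg(x,y_2)$. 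All quantities here are finite precisely because $\dom\overline{f_{U}}(\cdot,y)=K$ for every $y$ by (iii), which is what rules out any $\infty-\infty$ indeterminacy.

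At this point $g$, together with the data $K,Y,U$, satisfies every hypothesis of Theorem \ref{tm2}: the structural conditions on $K,Y,U$ are assumed, condition (i) of Theorem \ref{tm2} holds by the first paragraph, and condition (ii) holds by the second. Theorem \ref{tm2} then gives
$$\inf_{x\in U}\sup_{y\in Y}g(x,y)=\sup_{y\in Y}\inf_{x\in U}g(x,y),$$
and since $g(u,y)=f(u,y)$ for all $(u,y)\in U\times Y$, the left side equals $\inf_{x\in U}\sup_{y\in Y}f(x,y)$ and the right side equals $\sup_{y\in Y}\inf_{x\in U}f(x,y)$, which is the desired equality.

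I expect the main obstacle to be the second paragraph: making precise the identity $\overline{f_{U}}(x,y)=\liminf_{U\ni u\To x}f(u,y)$ straight from $\epi\overline{f_{U}}(\cdot,y)=\cl(\epi_{U} f(\cdot,y))$ and then carrying the concavelike inequality through the lower limit without creating undefined expressions --- and this is exactly the point at which hypothesis (iii) is used essentially rather than cosmetically. The properness of each $\overline{f_{U}}(\cdot,y)$ in the first paragraph is a smaller technical point resting on density of $U$ and lower semicontinuity.
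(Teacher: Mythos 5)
Your proof is correct and follows essentially the same route as the paper: both replace $f(\cdot,y)$ by its lower semicontinuous hull $\overline{f_U}(\cdot,y)$, verify that properness, convexity, lower semicontinuity and the concavelike property survive the hull operation, and then invoke the compact-case minimax machinery (you by citing Theorem \ref{tm2}, the paper by unfolding Theorem \ref{fan} and Proposition \ref{p312} directly). If anything your version is slightly more careful: you establish the concavelike inequality for every $x\in K$ and treat properness explicitly, whereas the paper verifies concavelikeness only for $x\in\co(U)$ before applying Fan's theorem on all of $K$.
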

\begin{proof} In virtue of Theorem \ref{t2.8}, one has $$\sup_{y\in Y}\inf_{x\in U}f(x,y)= \sup_{y\in Y}\min_{x\in K}\overline{f_U}(x,y).$$ According to Theorem \ref{t2.8}, the function $x\To \overline{f_U}(x,y)$ is convex, proper and lower semicontinuous for all $y\in Y.$

We show that $\overline{f_{U}}$ is concavelike in its second variable, that is, for every $y_1,y_2\in Y$ and $t\in[0,1]$ there exists $y_3\in Y$ such that $$(1-t)\overline{f_{U}}(x,y_1)+t\overline{f_{U}}(x,y_2)\le \overline{f_{U}}(x,y_3)\mbox{ for all }x\in\co(U).$$ From the hypothesis of the theorem, we have that for every $y_1,y_2\in Y$ and $t\in[0,1]$ there exists $y_3\in Y$ such that $(1-t){f}(u,y_1)+t {f}(u,y_2)\le {f}(u,y_3)$ for all $u\in U.$ From Theorem \ref{t2.8} we get that $f(u,y_j)=\overline{f_{U}}(u,y_j)$ for all $u\in U,\, j=1,2,3.$ Thus, $(1-t)\overline{f_{U}}(u,y_1)+t \overline{f_{U}}(u,y_2)\le \overline{f_{U}}(u,y_3)$ for all $u\in U.$ Let $x\in \co(U).$ By the construction of $\overline{f_{U}}(\cdot,y_3)$ we obtain that there exists a net $((u^i,r_i))\subseteq\epi f_U(\cdot,y_3),$ such that $(u^i,r_i)\To (x,\overline{f_{U}}(x,y_3)).$ We have $$r_i\ge f(u^i,y_3)=\overline{f_{U}}(u^i,y_3),\mbox{ and }r_i\To \overline{f_{U}}(x,y_3),$$ consequently
$$\overline{f_{U}}(x,y_3)=\liminf r_i\ge\liminf\overline{f_{U}}(u^i,y_3).$$
 Since $\overline{f_{U}}(\cdot,y_j),\,j=1,2,3$  is lower semicontinuous we have, that
$$(1-t)\overline{f_{U}}(x,y_1)+t \overline{f_{U}}(x,y_2)\le$$
$$\le(1-t)\liminf\overline{f_{U}}(u^i,y_1)+t \liminf\overline{f_{U}}(u^i,y_2)\le$$
$$\le\liminf((1-t)\overline{f_{U}}(u^i,y_1)+t\overline{f_{U}}(u^i,y_2))\le$$
$$\le\liminf \overline{f_{U}}(u^i,y_3)=\overline{f_{U}}(x,y_3).$$
 Hence, $$(1-t)\overline{f_{U}}(x,y_1)+t \overline{f_{U}}(x,y_2)\le \overline{f_{U}}(x,y_3)\mbox{ for all }x\in\co(U).$$

Now, Theorem \ref{fan} assures that $$\sup_{y\in Y}\min_{x\in K}\overline{f_U}(x,y)=\min_{x\in K}\sup_{y\in Y}\overline{f_U}(x,y).$$
On  the other hand, the function $g:K\To\oR,\,g(x)=\sup_{y\in Y}\overline{f_U}(x,y)$ is proper, convex and lower semicontinuous as a pointwise supremum of a family of proper, convex and lower semicontinuous functions, hence according to Proposition \ref{p312} one has
$$\min_{x\in K}\sup_{y\in Y}\overline{f_U}(x,y)=\inf_{x\in U}\sup_{y\in Y}\overline{f_U}(x,y).$$

But, according to Theorem \ref{t2.8}, we have $f(x,y)=\overline{f_U}(x,y)$ for all $x\in U$.
Hence,
$$ \inf_{x\in U}\sup_{y\in Y}\overline{f_U}(x,y)=\inf_{x\in U}\sup_{y\in Y}f(x,y),$$ and the conclusion follows.
\end{proof}

First of all we would like to emphasize that the conclusions of Theorem \ref{tm2} and Theorem \ref{tm3} does not remain valid if in its hypothesis we assume only that the set $U$ is dense in $K$ as the next example shows.  Moreover, the assumptions imposed on the bifunction $f$ in Theorem \ref{fan} and Theorem \ref{sion}  are also satisfied, however their conclusions fail. This fact shows that the general results of Fan and Sion cannot be extended on general dense sets.

\begin{example}\rm\label{extm} Let  $X$ be an infinite dimensional real Hilbert space. Let $K=Y=\left\{ x\in X:\left\Vert x\right\Vert \le 1\right\}$ be the unit ball of $X$ and let $U=\left\{ x\in X:\left\Vert x\right\Vert =1\right\}$. Then according to Example \ref{ex1}, $U$ is dense in $K$ with respect to the weak topology of $X,$ but is not self-segment-dense in $K.$ Obviously $\co(U)=K$ which according to Banach-Alaoglu Theorem is weakly compact. Obviously, in this case $\co(U)$ is segment-dense in $K.$  Consider the function $$f:K\times Y\To\R,\,f(x,y)=\<x,y\>.$$ Then, it can easily be verified that the conditions in the hypotheses  of Theorem \ref{fan}, Theorem \ref{sion}, Theorem \ref{tm2} and Theorem \ref{tm3} are fulfilled. Nevertheless
$$\inf_{x\in U}\sup_{y\in Y}f(x,y)=1$$
and
$$\sup_{y\in Y}\inf_{x\in U}f(x,y)=0.$$
\end{example}

As immediate consequences of Theorem \ref{tm3} we have the following results.

\begin{corollary}\label{tm} Let $K$ be a  nonempty, compact and convex  subset of the Hausdorff locally convex topological vector space $X$ and let $Y$ be an arbitrary nonempty set. Let $U\subseteq K$ be a self-segment-dense set. Consider the mapping $f:K\times Y\To\R,$ and assume  that the following assumptions are fulfilled.
\begin{itemize}
\item[(i)] The map $x\To f(x,y)$ is  convex  and lower semicontinuous on $U$ for all $y\in Y$.
\item[(ii)] The map $y\To f(x,y)$ is concavelike, for all $x\in  U.$
\item[(iii)] For every $y\in Y$, $\dom \overline{f_U}(\cdot,y)=K.$
\end{itemize}
Assume further that one of the following conditions hold.
\begin{itemize}
\item[(a)] $\co(U)=K.$
\item[(b)] $\mbox{For all }x\in K \mbox{ there exists }y\in\co(U),\mbox{ such that }[y,x)\subseteq\co(U).$
\item[(c)] The interior of $\co(U)$ is non-empty.
\item[(d)] $X$ is finite dimensional.
\end{itemize}
Then,
$$\inf_{x\in  U}\sup_{y\in Y}f(x,y)=\sup_{y\in Y}\inf_{x\in U}f(x,y).$$
\end{corollary}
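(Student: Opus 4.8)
The plan is to derive Corollary \ref{tm} directly from Theorem \ref{tm3} by checking that each of the four alternative conditions (a)--(d) implies the hypothesis ``$\co(U)$ is segment-dense in $K$'' appearing in Theorem \ref{tm3}. Since assumptions (i), (ii), (iii) of Corollary \ref{tm} are literally the same as (i), (ii), (iii) of Theorem \ref{tm3} (with $\dom f(\cdot,y)$ understood to contain $U$ and with $K$ compact and convex so that Fan's theorem applies), once segment-density of $\co(U)$ in $K$ is established the conclusion $\inf_{x\in U}\sup_{y\in Y}f(x,y)=\sup_{y\in Y}\inf_{x\in U}f(x,y)$ is immediate. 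So the entire content is the implication ``(a) or (b) or (c) or (d) $\Rightarrow$ $\co(U)$ segment-dense in $K$'', and this is exactly what Remark \ref{r313} records.

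Concretely, here is how I would carry it out. First, for case (a): if $\co(U)=K$, then for every $x\in K$ we may take $y:=x\in\co(U)$, and $x$ is trivially a cluster point of $[x,x]\cap U$... more carefully, since $U$ is self-segment-dense hence dense in $K=\co(U)$, one picks any $y\in U$ with $x\in\cl([x,y]\cap U)$ using density along a segment; in any event $\co(U)=K$ makes segment-density of $\co(U)$ in $K$ tautological because a convex set is segment-dense in itself. Second, case (b) is precisely condition (a) of Remark \ref{r313}: if for every $x\in K$ there is $y\in\co(U)$ with $[y,x)\subseteq\co(U)$, then $x$ is a cluster point of $[y,x)\subseteq\co(U)$, so $\co(U)$ is segment-dense in $K$ (here one should note $K=\dom f$ in the present setting, or rather that $\dom f(\cdot,y)\supseteq U$ and $K$ plays the role of the ambient convex set, so segment-density is what Theorem \ref{tm3} needs). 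Third, case (c): if $\inte(\co(U))\neq\emptyset$, invoke the line segment principle \eqref{e6} from Remark \ref{r313}(b): pick $z\in\inte(\co(U))$; for any $x\in K=\cl(U)\subseteq\overline{\co}(U)$ the half-open segment $[z,x)=\{(1-\lambda)x+\lambda z:\lambda\in(0,1]\}$ lies in $\inte(\co(U))\subseteq\co(U)$, so again $x$ is a cluster point of $[z,x)\cap\co(U)$, giving segment-density. Fourth, case (d): if $X$ is finite dimensional, then $\co(U)$ has nonempty relative interior, and the analogue of the line segment principle within the affine hull $\aff(\co(U))$ applies; but one must observe that $\cl(U)=K\subseteq\overline{\co}(U)=\co(U)$ (closedness of $\co(U)$ in finite dimensions, since $K$ compact forces $U$ bounded), so in fact $\co(U)=K$ and we are reduced to case (a) — alternatively just repeat the relative-interior segment argument. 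In every case, feed the verified segment-density into Theorem \ref{tm3} and conclude.

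The only step requiring a little care — and the one I would flag as the main (mild) obstacle — is the bookkeeping in cases (c) and (d) about \emph{which} convex set $\co(U)$ is supposed to be segment-dense \emph{in}. Theorem \ref{tm3} asks for $\co(U)$ segment-dense in $K$; the line segment principle gives $[z,x)\subseteq\co(U)$ for $x\in\overline{\co}(U)$, and one needs $K\subseteq\overline{\co}(U)$, which holds because $U\subseteq K$ is dense in $K$ so $K=\cl(U)\subseteq\overline{\co}(U)$, and conversely $\overline{\co}(U)\subseteq\overline{\co}(K)=K$ by convexity and compactness (hence closedness) of $K$; thus $\overline{\co}(U)=K$. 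With that identification the segment principle delivers exactly ``for each $x\in K$ there is $y\in\co(U)$ with $[y,x)\subseteq\co(U)$'', i.e.\ condition (b)/Remark \ref{r313}(a), and segment-density follows. For case (d) one additionally uses that in finite dimensions $\overline{\co}(U)=\co(U)$ when $U$ is bounded, so $\co(U)=K$ outright. Everything else is a direct citation of Theorem \ref{tm3}, Remark \ref{r313}, and \eqref{e6}, so no genuine new estimate is needed.
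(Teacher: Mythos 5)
Your proposal is correct and follows exactly the paper's route: verify that each of (a)--(d) forces $\co(U)$ to be segment-dense in $K$ (this is precisely Remark \ref{r313}, together with the trivial case $\co(U)=K$), and then invoke Theorem \ref{tm3}. One small caveat: in case (d) your first justification, that $\co(U)$ is closed because $U$ is bounded in finite dimensions, is false in general (the convex hull of a bounded set need not be closed), but the fallback you offer via the relative-interior form of the line segment principle is the correct argument and is the one the paper intends.
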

\begin{proof} Observe that according to Remark \ref{r313}, each of the conditions (a)-(d) assures that $\co(U)$ is segment-dense in $K.$ The conclusion follows from Theorem \ref{tm3}.
\end{proof}
\begin{corollary}\label{c4} Let $K$ be a  nonempty and convex subset of the Hausdorff locally convex space $X$ and let $Y$ be an arbitrary nonempty set. Let $U\subseteq K$ be a self-segment-dense set, let $\mathcal{S}\subseteq U$ be a  subset of $U$ and assume  that $\co(\mathcal{S})$ is compact.  Consider further the mapping $f:K\times Y\To\R,$ and assume    that the following assumptions are fulfilled.
\begin{itemize}
\item[(i)] The map $x\To f(x,y)$ is  convex  and lower semicontinuous  on $\co(\mathcal{S})\cap U$ for all $y\in Y$.
\item[(ii)] The map $y\To f(x,y)$ is concavelike, for all $x\in \co(\mathcal{S})\cap U.$
\end{itemize}
Then,
$$\inf_{x\in \co(\mathcal{S})\cap U}\sup_{y\in Y}f(x,y)=\sup_{y\in Y}\inf_{x\in\co(\mathcal{S})\cap U}f(x,y).$$
\end{corollary}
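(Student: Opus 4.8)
The plan is to reduce the statement to Corollary \ref{tm}, applied with the compact convex set $\co(\mathcal{S})$ in place of $K$ and with $V:=\co(\mathcal{S})\cap U$ in place of $U$. First I would record the structural facts about $V$. Since $\mathcal{S}\subseteq U$ and $\mathcal{S}\subseteq\co(\mathcal{S})$ we have $\mathcal{S}\subseteq V\subseteq\co(\mathcal{S})$, whence $\co(V)=\co(\mathcal{S})$; in particular $\co(V)$ is the whole compact convex set $\co(\mathcal{S})$, so condition (a) of Corollary \ref{tm} will be automatic. Moreover $\co(\mathcal{S})$ is convex and, being compact in the Hausdorff space $X$, closed, so $\overline{\co}(\mathcal{S})=\co(\mathcal{S})$; hence by Lemma \ref{l26} (used with the convex set $K$ and the self-segment-dense set $U$), $V=\co(\mathcal{S})\cap U$ is self-segment-dense in $\co(\mathcal{S})$. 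Thus $V$ plays the role of a self-segment-dense subset of the compact convex set $\co(\mathcal{S})$, and $f$ restricted to $\co(\mathcal{S})\times Y$ is real-valued since $\co(\mathcal{S})\subseteq K$.

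The only nontrivial point is verifying hypothesis (iii) of Corollary \ref{tm} in this setting, namely $\dom\overline{f_V}(\cdot,y)=\co(\mathcal{S})$ for every $y\in Y$, where $\overline{f_V}(\cdot,y)$ has epigraph $\cl(\epi f_V(\cdot,y))$. For fixed $y$, introduce the auxiliary function $F_y:X\To\oR$ equal to $f(\cdot,y)$ on $\co(\mathcal{S})$ and $+\infty$ elsewhere. Then $\dom F_y=\co(\mathcal{S})$ is convex, $F_y$ is proper (as $f$ is real-valued), $V$ is self-segment-dense in $\dom F_y$ by the previous paragraph, and $F_y$ is convex on $V$ because assumption (i) says $f(\cdot,y)$ is convex on $\co(\mathcal{S})\cap U=V$ and $F_y$ agrees with $f(\cdot,y)$ on $V$. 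Hence Theorem \ref{t2.8}(c) applies and gives that $\overline{(F_y)_V}$ is convex; since $\epi(F_y)_V=\epi f_V(\cdot,y)$, this function is exactly $\overline{f_V}(\cdot,y)$, so $\overline{f_V}(\cdot,y)$ is convex. Now $V\subseteq\dom\overline{f_V}(\cdot,y)$ trivially (each $(v,f(v,y))$ lies in the epigraph), and $\epi\overline{f_V}(\cdot,y)=\cl(\epi f_V(\cdot,y))\subseteq\cl(V\times\R)=\co(\mathcal{S})\times\R$, so $\dom\overline{f_V}(\cdot,y)\subseteq\co(\mathcal{S})$. Since $\dom\overline{f_V}(\cdot,y)$ is convex and contains $V$, it contains $\co(V)=\co(\mathcal{S})$; combining the two inclusions yields $\dom\overline{f_V}(\cdot,y)=\co(\mathcal{S})$.

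It then remains to assemble the pieces. Apply Corollary \ref{tm} with $\co(\mathcal{S})$ in place of $K$, $V=\co(\mathcal{S})\cap U$ in place of $U$, and $f$ restricted to $\co(\mathcal{S})\times Y$: compactness and convexity of $\co(\mathcal{S})$ are given, the self-segment-denseness of $V$ in $\co(\mathcal{S})$ was established above, hypotheses (i) and (ii) of Corollary \ref{tm} are exactly assumptions (i) and (ii) of the present statement, hypothesis (iii) was just verified, and condition (a) holds since $\co(V)=\co(\mathcal{S})$. Corollary \ref{tm} then yields $\inf_{x\in V}\sup_{y\in Y}f(x,y)=\sup_{y\in Y}\inf_{x\in V}f(x,y)$, which is the desired equality; the degenerate case $\mathcal{S}=\emptyset$ is trivial, both sides being $+\infty$. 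I expect the main obstacle to be the verification of (iii): one must notice that passing to the auxiliary function $F_y$ and invoking the convexity conclusion of Theorem \ref{t2.8}(c) is what upgrades the chain of inclusions $V\subseteq\dom\overline{f_V}(\cdot,y)\subseteq\co(\mathcal{S})$ to an equality --- precisely the step that may fail, and is therefore assumed outright, in Theorem \ref{tm3}, but which comes for free here because $\co(V)$ coincides with the ambient compact set.
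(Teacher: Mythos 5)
Your proof is correct and follows essentially the same route as the paper's: both reduce to the minimax theorem on the compact convex set $\co(\mathcal{S})$ (the paper invokes Theorem \ref{tm3} directly, you its consequence Corollary \ref{tm} with condition (a)), using Lemma \ref{l26} to see that $V=\co(\mathcal{S})\cap U$ is self-segment-dense in $\co(\mathcal{S})$ and that $\co(V)=\co(\mathcal{S})$. The only difference is that you explicitly verify hypothesis (iii), namely $\dom\overline{f_V}(\cdot,y)=\co(\mathcal{S})$, by deducing convexity of $\overline{f_V}(\cdot,y)$ from Theorem \ref{t2.8}(c) and sandwiching $\co(V)\subseteq\dom\overline{f_V}(\cdot,y)\subseteq\co(\mathcal{S})$ --- a point the paper's own proof passes over in silence.
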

\begin{proof}  Note that according to Lemma \ref{l26}, $\co(\mathcal{S})\cap U$ is self-segment-dense in $\co(\mathcal{S}).$ Obviously $\co(\co(\mathcal{S})\cap U)=\co(\mathcal{S}),$ which shows that $\co(\co(\mathcal{S})\cap U)$ is segment-dense in $\co(\mathcal{S})$. Hence, Theorem \ref{tm3} can be applied for the function
$$\tilde{f}:\co(S)\times Y\To\R,\,\tilde{f}(x,y)=f(x,y).$$
 \end{proof}
 We would like to  emphasize that the conclusion of Corollary \ref{c4} fails even in finite dimension if in its hypothesis we replace the condition $U$ is self-segment-dense in $K$ by the condition  that $U$ is dense, or segment-dense  in $K.$
\begin{example}\rm\label{ex32} Let $K=Y=\{(x^1,x^2)\in\R^2:(x^1)^2+(x^2)^2\le 1\}$ be the closed unit ball of $\R^2$ and let $U=K\setminus \left[\left(-\frac12,0\right),\left(\frac12,0\right)\right].$ Obviously $U$ is dense in $K$, and also segment-dense in the sense of The Luc, but not self-segment-dense, since for $u_1=(-1,0),u_2=(1,0)\in U$ one has $$\cl([u_1,u_2]\cap U)=\left[\left(-1,0\right),\left(-\frac12,0\right)\right[\cup \left]\left(\frac12,0\right),\left(1,0\right)\right]\neq[u_1,u_2].$$
Consider the bifunction $f:K\times K\To\R,\,f(x,y)=f((x^1,x^2),(y^1,y^2))=x^1y^1+x^2y^2.$ Then, it is straightforward that for every subset $\mathcal{S}\subseteq U$ the conditions (i) and (ii) of Corollary \ref{c4} are satisfied. Nevertheless its conclusion fails as we will show in what follows. This is due to the fact that $U$ is not self-segment-dense in $K.$

Indeed, let $\mathcal{S}=\{u_1,u_2\}.$ Then, $$\inf_{x\in \co(\mathcal{S})\cap U}\sup_{y\in Y}f(x,y)=\inf_{x^1\in [-1,-\frac12[\cup]\frac12,1]}\sup_{y\in K}x^1y^1=\inf_{x^1\in [-1,-\frac12[\cup]\frac12,1]}|x^1|=\frac12.$$
On the other hand,
$$\sup_{y\in Y}\inf_{x\in \co(\mathcal{S})\cap U}f(x,y)=\sup_{y\in K}\inf_{x^1\in [-1,-\frac12[\cup]\frac12,1]}x^1y^1=\sup_{y\in K}-|y^1|=0.$$
\end{example}

In the next Corollary we assume that $Y$ is finite.

\begin{corollary}\label{ctm1} Let $K$ be a  nonempty and convex  subset of the Hausdorff locally convex topological vector space $X$ and let $Y$ be an arbitrary  nonempty and finite set. Let $U\subseteq K$ be a self-segment-dense set in $K.$ Consider further the mapping $f:K\times Y\To\R,$ and assume    that the following assumptions are fulfilled.
\begin{itemize}
\item[(i)] The map $x\To f(x,y)$ is  convex  and lower semicontinuous on $U$ for all $y\in Y$.
\item[(ii)] The map $y\To f(x,y)$ is concavelike, for all $x\in  U.$
\item[(iii)] For every $y\in Y$, $\inf_{x\in U} f(x,y)$ is attained.
\end{itemize}
Then,
$$\inf_{x\in  U}\sup_{y\in Y}f(x,y)=\sup_{y\in Y}\min_{x\in U}f(x,y).$$
\end{corollary}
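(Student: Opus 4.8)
The plan is to deduce the statement from Corollary \ref{c4} by replacing the possibly non-compact set $K$ with a suitable compact convex set built out of $U$. Since $Y$ is finite, write $Y=\{y_1,\dots,y_m\}$ and use assumption (iii) to choose, for each $j\in\{1,\dots,m\}$, a point $u_j\in U$ with $f(u_j,y_j)=\min_{x\in U}f(x,y_j)$; set $\mathcal{S}=\{u_1,\dots,u_m\}\subseteq U$. As the convex hull of finitely many points, $\co(\mathcal{S})$ is the continuous image of a standard simplex, hence compact.

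Next one checks the hypotheses of Corollary \ref{c4} for this $\mathcal{S}$. Convexity and lower semicontinuity of $x\mapsto f(x,y)$ on $U$ restrict to the same properties on the subset $\co(\mathcal{S})\cap U$, so (i) of Corollary \ref{c4} holds; the concavelike property of $y\mapsto f(x,y)$, being a condition quantified over $x$, is likewise inherited when $x$ ranges only over $\co(\mathcal{S})\cap U$, so (ii) holds. Corollary \ref{c4} therefore yields
$$\inf_{x\in\co(\mathcal{S})\cap U}\sup_{y\in Y}f(x,y)=\sup_{y\in Y}\inf_{x\in\co(\mathcal{S})\cap U}f(x,y).$$

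It then remains to match both sides with the quantities in the statement. For each $j$, since $u_j\in\co(\mathcal{S})\cap U$ we have $\inf_{x\in\co(\mathcal{S})\cap U}f(x,y_j)\le f(u_j,y_j)=\min_{x\in U}f(x,y_j)$, while $\co(\mathcal{S})\cap U\subseteq U$ gives the opposite inequality; hence the two agree for every $j$, and so $\sup_{y\in Y}\inf_{x\in\co(\mathcal{S})\cap U}f(x,y)=\sup_{y\in Y}\min_{x\in U}f(x,y)$. Using in addition the elementary inequality $\sup_{y}\inf_{x}\le\inf_{x}\sup_{y}$ (with the inner minimum over $U$ in place of the infimum, which is legitimate by (iii)) together with the inclusion $\co(\mathcal{S})\cap U\subseteq U$, one arrives at
$$\inf_{x\in U}\sup_{y\in Y}f(x,y)\le\inf_{x\in\co(\mathcal{S})\cap U}\sup_{y\in Y}f(x,y)=\sup_{y\in Y}\min_{x\in U}f(x,y)\le\inf_{x\in U}\sup_{y\in Y}f(x,y),$$
so all three quantities coincide, which is the claim.

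The argument is essentially bookkeeping on top of Corollary \ref{c4}; the one genuine idea is the choice of $\mathcal{S}$, and the two features of the hypothesis that make it work are precisely assumption (iii) --- which lets us pass to a compact set without disturbing the inner infima --- and the finiteness of $Y$, which keeps $\mathcal{S}$ finite so that $\co(\mathcal{S})$ is compact. The only point needing care is that $\co(\mathcal{S})\cap U$ be self-segment-dense in $\co(\mathcal{S})$ with $\co(\co(\mathcal{S})\cap U)=\co(\mathcal{S})$ segment-dense in it, so that Corollary \ref{c4} genuinely applies; but this is exactly Lemma \ref{l26}, and is already handled inside the proof of Corollary \ref{c4}.
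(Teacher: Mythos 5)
Your proof is correct and follows essentially the same route as the paper: the paper likewise takes $\mathcal{S}$ to be a finite set of minimizers (one per $y\in Y$), notes that $\co(\mathcal{S})$ is compact and that $\co(\mathcal{S})\cap U$ is self-segment-dense in it via Lemma \ref{l25}, applies the minimax result on that set, and closes with the same chain of inequalities. The only cosmetic difference is that you invoke Corollary \ref{c4} as a black box while the paper applies Theorem \ref{tm3} directly to the restriction, which is exactly what Corollary \ref{c4} does anyway.
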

\begin{proof} The result is a consequence of Theorem \ref{tm3}.  Let $S=\cup_{y\in Y} \{u_y\},$ where $u_y\in S_y=\{u\in U: \min_{x\in U}f(x,y)=f(u,y)\}.$ Then ${\co}(S)\subseteq \dom f(\cdot,y)=K$ for all $y\in Y.$ Obviously ${\co}(S)$ is compact as a convex hull of a finite set, further according to Lemma \ref{l25}, $\co(S)\cap U$ is self-segment-dense in ${\co}(S).$ Let $\tilde{f}$ be the restriction of $f$ on ${\co} (S)\times Y,$ that is
 $$\tilde{f}:{\co}(S)\times Y\To \R,\, \tilde{f}(x,y)=f(x,y).$$ Then, Theorem \ref{tm3} can be applied, hence
 $$\inf_{x\in \co(S)\cap U}\sup_{y\in Y}\tilde{f}(x,y)=\sup_{y\in Y}\inf_{x\in\co(S)\cap U}\tilde{f}(x,y).$$
 In other words
 $$\inf_{x\in \co(S)\cap U}\sup_{y\in Y}{f}(x,y)=\sup_{y\in Y}\inf_{x\in\co(S)\cap U}{f}(x,y).$$
But, by construction of $S$ one has that for every $y\in Y,$ $\min_{x\in U} f(x,y)=\inf_{x\in\co(S)\cap U}{f}(x,y)$, hence
 $$\sup_{y\in Y}\min_{x\in  U}{f}(x,y)=\sup_{y\in Y}\inf_{x\in\co(S)\cap U}{f}(x,y)=\inf_{x\in \co(S)\cap U}\sup_{y\in Y}{f}(x,y)\ge\inf_{x\in  U}\sup_{y\in Y}{f}(x,y).$$

 Since $\inf_{x\in  U}\sup_{y\in Y}{f}(x,y)\ge \sup_{y\in Y}\inf_{x\in  U}{f}(x,y)$ always holds the conclusion follows.
\end{proof}

Observe that  in the previous result  we assumed that $Y$ is finite. This fact assured that the set $\co (S)$ is compact. Note that Theorem \ref{tm3} is a great theoretical result, nevertheless the segment-dense requirement of $\co (U)$ in some cases might be restrictive. In what follows we present a minimax result on general dense sets, where we do not assume that $\co (U)$ is segment-dense. However, we have to consider some quite strong conditions imposed  to the bifunction that describes the minimax problem.
 Further, observe that if we replace the lower semicontinuity assumption of the maps $x\To f(x,y)$ on $U$ for all $y\in Y$ in the hypothesis of Theorem \ref{tm3} by their continuity assumption on $K$, then Corollary \ref{c1}  assures that $\inf_{x\in U}f(x,y)=\inf_{x\in K}f(x,y)$ for all $y\in Y.$ Therefore, one can renounce to the assumption $U$ is self-segment-dense and  $\co(U)$ is segment-dense in $K$, in the hypothesis of Theorem \ref{tm3}, provided the condition $(i)$ is replaced by the condition: the map $x\To f(x,y)$ is  convex and continuous on $K$ for all $y\in Y$. In this case due to the continuity of $f$ in its first variable, the condition $(ii)$ becomes: the map $y\To f(x,y)$ is concavelike, for all $x\in  K.$ Further, we will need an extra assumption in order to assure the continuity of the function $g(x)=\sup_{y\in Y}f(x,y).$

 For $K\subseteq X$  let us denote by $C(K)$ the space of continuous real valued functions, that is
$C(K)=\{f:K\to\R:f\mbox{ continuous }\}.$
A subset $S\subseteq C(K)$ is said to be equicontinuous if for every $x \in K$ and every $\e> 0$, $x$ has a neighborhood $U_x$ such that
$\forall y \in U_x\cap K, \forall f \in S, |f(y) - f(x)| < \e.$
A set $S\subseteq C(K)$ is said to be pointwise bounded if for every $x \in K$, $\sup_{f \in S} | f(x) |  < \infty.$
If $K$ is also compact, we can endow $C(K)$ with the uniform norm, $\|f\|=\sup_{x\in K}|f(x)|.$
In case $K$ is compact, the Arzel\`a-Ascoli theorem \cite{DS}, affirms that  a subset $S$ of $C(K)$ is relatively compact in the topology induced by the uniform norm of $C(K),$ if and only if it is equicontinuous and pointwise bounded. These concepts allow us to obtain a result in which we do not assume the self-segment-denseness of $U$ or the segment-denseness of $\co (U).$

\begin{theorem}\label{tm1} Let $K$ be a  nonempty, compact and convex subset of the Hausdorff locally convex topological vector space $X$ and let $Y$ be an arbitrary  nonempty set. Let $U\subseteq K$ be a dense set in $K.$ Consider further the mapping $f:K\times Y\To\R,$ and assume  that the following assumptions are fulfilled.
\begin{itemize}
\item[(i)] The map $x\To f(x,y)$ is  convex  on $K$ for all $y\in Y$.
\item[(ii)] The map $y\To f(x,y)$ is concavelike, for all $x\in  K.$
\item[(iii)] The family $(f(\cdot,y))_{y\in Y}$ is an  equicontinuous  family of $C(K)$.
\item[(iv)] For all $x\in K$ one has $\sup_{y\in Y}f(x,y)< \infty.$
\end{itemize}
Then,
$$\inf_{x\in  U}\sup_{y\in Y}f(x,y)=\sup_{y\in Y}\inf_{x\in U}f(x,y).$$
\end{theorem}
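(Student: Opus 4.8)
The plan is to prove the identity by sandwiching it between Fan's classical minimax equality on the compact set $K$ and two applications of Corollary \ref{c1}, which is exactly the tool that lets us replace an infimum over the dense set $U$ by an infimum over $K$ for a function that is continuous on $K$. Concretely, I would establish the chain
$$\sup\nolimits_{y\in Y}\inf\nolimits_{x\in U}f(x,y)=\sup\nolimits_{y\in Y}\min\nolimits_{x\in K}f(x,y)=\min\nolimits_{x\in K}\sup\nolimits_{y\in Y}f(x,y)=\inf\nolimits_{x\in U}\sup\nolimits_{y\in Y}f(x,y),$$
where the first equality comes from Corollary \ref{c1} applied to each partial function $f(\cdot,y)$, the middle one from Theorem \ref{fan}, and the last one from Corollary \ref{c1} applied to $g:=\sup_{y\in Y}f(\cdot,y)$. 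Since $\inf_{x\in U}\sup_{y\in Y}f(x,y)\ge\sup_{y\in Y}\inf_{x\in U}f(x,y)$ always holds, the chain yields the asserted equality.

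For the first equality: by (iii) each $f(\cdot,y)$ lies in $C(K)$, hence is continuous on $K$; since $K$ is compact the infimum of $f(\cdot,y)$ over $K$ is attained, and since $U$ is dense in $K$, Corollary \ref{c1} gives $\inf_{x\in U}f(x,y)=\inf_{x\in K}f(x,y)=\min_{x\in K}f(x,y)$ (to apply it formally one extends $f(\cdot,y)$ by $+\infty$ off $K$; as $K$ is closed this extension is proper and lower semicontinuous on $X$, and $U$ is graphically dense in its domain $K$ by continuity, so Theorem \ref{t2.8}~(a),(b) apply). Taking $\sup_{y\in Y}$ gives the first equality. For the middle equality I would check the hypotheses of Theorem \ref{fan}: $K$ is compact, condition (i) forces $f$ to be convexlike on $K$, condition (ii) is precisely concavelikeness on $Y$, and continuity of $f(\cdot,y)$ implies lower semicontinuity on $K$; Fan's theorem then applies verbatim.

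The last equality is the crux, and the step where (iii) and (iv) are genuinely used. Set $g(x)=\sup_{y\in Y}f(x,y)$, which by (iv) is finite (real-valued) on $K$. I would show $g\in C(K)$: given $x_0\in K$ and $\e>0$, take the neighbourhood $U_{x_0}$ of $x_0$ furnished by the equicontinuity assumption (iii), so that $|f(x,y)-f(x_0,y)|<\e$ for all $x\in U_{x_0}\cap K$ and all $y\in Y$; then for such $x$ one has $|g(x)-g(x_0)|\le\sup_{y\in Y}|f(x,y)-f(x_0,y)|\le\e$, proving continuity of $g$ at $x_0$. Thus $g$ is proper and continuous on $K$, $K$ is compact, and $U$ is dense in $K$, so exactly as above Corollary \ref{c1} yields $\inf_{x\in U}g(x)=\min_{x\in K}g(x)$, which is the last equality.

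I expect the only real obstacle to be the verification that $g$ is continuous: a pointwise supremum of continuous functions is automatically lower semicontinuous but need not be upper semicontinuous, and it is precisely equicontinuity (iii) together with the finiteness (iv) that restores upper semicontinuity. This is also what distinguishes the present setting from Example \ref{extm}, where the family $\{\langle\cdot,y\rangle\}_{y}$ fails to be equicontinuous on the weakly compact ball (its supremum is the norm, which is not weakly continuous), and consequently the conclusion collapses. Everything else — attainment of the minima by compactness, the trivial max--min inequality, and the bookkeeping of extending functions by $+\infty$ off the closed set $K$ so that Corollary \ref{c1} is literally applicable — is routine.
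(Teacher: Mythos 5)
Your proposal is correct and follows essentially the same route as the paper: Corollary \ref{c1} to pass from $\inf_U$ to $\min_K$ in each partial function, Fan's theorem on the compact convex set $K$, and then continuity of $g=\sup_{y\in Y}f(\cdot,y)$ (established from equicontinuity (iii) and finiteness (iv)) to apply Corollary \ref{c1} once more. Your one-line estimate $|g(x)-g(x_0)|\le\sup_{y\in Y}|f(x,y)-f(x_0,y)|$ is a slightly cleaner version of the paper's two-sided supremum argument, and your care in extending $f(\cdot,y)$ by $+\infty$ off $K$ so that Corollary \ref{c1} literally applies is a welcome extra precision.
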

\begin{proof} Note that condition $(iii)$ ensures that $x\To f(x,y)$ is  continuous  on $K$ for all $y\in Y$. In virtue of Corollary \ref{c1}, one has \begin{equation}\label{e8}\sup_{y\in Y}\inf_{x\in U}f(x,y)= \sup_{y\in Y}\min_{x\in K}f(x,y).\end{equation} Theorem \ref{fan} assures that \begin{equation}\label{e9}\sup_{y\in Y}\min_{x\in K}f(x,y)=\min_{x\in K}\sup_{y\in Y}f(x,y).\end{equation}
On  the other hand, the function $g:K\To\oR,\,g(x)=\sup_{y\in Y}f(x,y)$ is proper, convex and lower semicontinuous as a pointwise supremum of a family of proper, convex and lower semicontinuous functions. From (iv) we have that  $g$ is not extended valued, that is $g(x)\in\R$ for all $x\in K.$ We show that $g$ is continuous on $K.$ Indeed, let $x_0\in K.$ Since the family $(f(\cdot,y))_{y\in Y}$ is equicontinuous one has that for every $\e>0$ there exists $U_0$ a neighbourhood of $x_0$, such  that, for all $x\in U_0\cap K$ and for all $y\in Y$ one has \begin{equation}\label{e7}|f(x,y)-f(x_0,y)|<\frac\e2.\end{equation}
Let us fix $\e>0$ and let $x_1\in U_0\cap K.$ Then by the definition of supremum, there exist $y_0,y_1\in Y$ such that $f(x_0,y_0)+\frac\e2>g(x_0)$ and $f(x_1,y_1)+\frac\e2>g(x_1).$ Obviously, $g(x_0)\ge f(x_0,y_1)$ and $g(x_1)\ge f(x_1,y_0).$ Hence,
$$f(x_1,y_0)-f(x_0,y_0)-\frac\e2\le g(x_1)-g(x_0)\le f(x_1,y_1)-f(x_0,y_1)+\frac\e2,$$
and using (\ref{e7}), one gets
$$|g(x_1)-g(x_0)|\le \e.$$
According to Corollary \ref{c1}, one has $\inf_{x\in U}g(x)=\min_{x\in K}g(x).$ In other words, \begin{equation}\label{e10}\min_{x\in K}\sup_{y\in Y}f(x,y)=\inf_{x\in U}\sup_{y\in Y}f(x,y).\end{equation}
The conclusion of theorem follows from (\ref{e8}), (\ref{e9}) and (\ref{e10}).
\end{proof}

\begin{remark}\rm\label{r41}
Example \ref{extm} becomes   again a counterexample for Theorem \ref{tm1}. However, note that  in this case the contradiction is not provided by the fact that the set $U$ considered is not self-segment-dense, but by the fact that the corresponding family of functions, $(f(\cdot,y))_{y\in Y}=(\<\cdot,y\>)_{y\in B}$ is not an  equicontinuous family in the weak topology of $X.$
\end{remark}

\section{Dense families of functionals}

In this section we apply our minimax results in order to prove the denseness of some family of functionals in $B (Y),$ where $ B (Y)$ is the space of bounded functions on $Y$ endowed with the topology of uniform norm. 
We  show that also here the concept of a self-segment-dense set is essential in order to obtain these results.

\begin{remark}\rm Note that in case $\{f(\cdot,y)\}_{y\in Y}$ is an equicontinuous family of $C(K)$, then for any dense set $U\subseteq K$ one has that
$$\forall k\in K,\,\forall \e>0,\,\exists u\in U\mbox{ such that }\sup_{y\in Y}|f(k,y)-f(u,y)|<\e.$$ In other words, $\|f(u,\cdot)-f(k,\cdot)\|< \e$ in the uniform norm of $B (Y).$ Consequently, $\{f(u,\cdot)\}_{u\in  U}$  is dense in $\{f(x,\cdot)\}_{x\in K}.$
\end{remark}

The next abstract theorem provides the denseness of the set of functions $\{f(u,\cdot)\}_{u\in U}$ in $\{f(x,\cdot)\}_{x\in K}$ in $B (Y)$, without the assumption of equicontinuity of the family $\{f(\cdot,y)\}_{y\in Y}$.

\begin{theorem}\label{ta0} Let $K$ be a  nonempty  subset of the Hausdorff locally convex topological vector space $X$ and let $Y$ be a nonempty subset of a locally convex topological vector space. Let $U\subseteq K$ be a dense set in $K.$ Consider further the bounded mapping $f:K\times Y\To\R,$ and assume  that the following assumptions are fulfilled.
\begin{itemize}
\item[(i)] The map $x\To f(x,y)$ is   continuous on $K$ for all $y\in Y$.
\item[(ii)] The map $y\To f(x,y)$ is  continuous, for all $x\in  K.$
\item[(iii)] For every $k\in K$, $\sup_{y\in Y}\inf_{x\in U} |f(x,y)-f(k,y)|=\inf_{x\in U}\sup_{y\in Y} |f(x,y)-f(k,y)|.$
\end{itemize}
Then, $\{f(u,\cdot)\}_{u\in  U}$  is dense in $\{f(x,\cdot)\}_{x\in K}\subseteq B (Y).$
\end{theorem}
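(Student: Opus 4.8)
The plan is to reduce the statement to the easy half of a minimax inequality, exploiting that the hard half is exactly hypothesis (iii). By the definition of denseness used in the excerpt, I must show $\{f(x,\cdot)\}_{x\in K}\subseteq\cl\{f(u,\cdot)\}_{u\in U}$ in the uniform norm of $B(Y)$; equivalently, for each $k\in K$ and each $\e>0$ I must produce $u\in U$ with $\sup_{y\in Y}|f(u,y)-f(k,y)|<\e$. Observe first that the boundedness of $f$ guarantees $f(x,\cdot)\in B(Y)$ for every $x\in K$, so all the objects involved genuinely live in $B(Y)$; assumptions (i) and (ii) only serve to make this framework natural.

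The core computation runs as follows. Fix $k\in K$ and an arbitrary $y\in Y$. Since $U$ is dense in $K$, there is a net $(u_i)\subseteq U$ with $u_i\To k$, and by continuity of $x\mapsto f(x,y)$ on $K$ (assumption (i)) one gets $f(u_i,y)\To f(k,y)$, hence $|f(u_i,y)-f(k,y)|\To 0$. Since each of these terms dominates $\inf_{x\in U}|f(x,y)-f(k,y)|\ge 0$, passing to the limit gives $\inf_{x\in U}|f(x,y)-f(k,y)|=0$. As $y\in Y$ was arbitrary, $\sup_{y\in Y}\inf_{x\in U}|f(x,y)-f(k,y)|=0$. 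Now I invoke hypothesis (iii) for this particular $k$ to transfer the equality to the other order of $\inf$ and $\sup$, obtaining $\inf_{x\in U}\sup_{y\in Y}|f(x,y)-f(k,y)|=0$. This infimum being zero is precisely the assertion that for every $\e>0$ there exists $u\in U$ with $\sup_{y\in Y}|f(u,y)-f(k,y)|<\e$, i.e. $f(u,\cdot)$ is within $\e$ of $f(k,\cdot)$ in $B(Y)$.

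Letting $k$ range over $K$ then yields $\{f(x,\cdot)\}_{x\in K}\subseteq\cl\{f(u,\cdot)\}_{u\in U}$, which is the conclusion. I do not expect any real obstacle inside this argument itself — the only care needed is to work with nets rather than sequences in the general locally convex setting, and to notice that the entire nontrivial content of the theorem is concentrated in the minimax-type equality (iii), which is assumed here and which will have to be verified (through the minimax results of Section 4 together with the self-segment-dense apparatus) in the concrete applications that follow.
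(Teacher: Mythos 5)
Your proof is correct, and it isolates exactly the right mechanism: the easy iterated quantity $\sup_{y\in Y}\inf_{x\in U}|f(x,y)-f(k,y)|$ is controlled directly, and hypothesis (iii) converts this into the hard one, $\inf_{x\in U}\sup_{y\in Y}|f(x,y)-f(k,y)|$, which is what denseness in the uniform norm of $B(Y)$ requires. The paper's own proof follows the same overall strategy but executes the first step more heavily: for each $y_0\in Y$ it builds, via the continuity of $x\mapsto f(x,y_0)$ \emph{and} of $y\mapsto f(u_0,y)$ and $y\mapsto f(k,y)$, a convex neighbourhood $V(y_0)$ on which a single $u_0\in U$ satisfies $|f(u_0,y)-f(k,y)|\le\e/2$, and then deduces $\sup_{y}\inf_{u}|f(u,y)-f(k,y)|\le\e/2<\e$. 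Your argument gets the sharper conclusion $\sup_{y}\inf_{x}|f(x,y)-f(k,y)|=0$ pointwise in $y$, using only the denseness of $U$ and hypothesis (i); you are right that hypothesis (ii) plays no logical role in this particular theorem (the neighbourhood-and-cover construction in the paper is really the template for Theorem \ref{ta}, where compactness of $Y$ is used to extract a finite subcover, and it is only there that continuity in $y$ earns its keep). So your route is a genuinely leaner version of the same argument; what the paper's longer version buys is a uniform, quantitative statement that transfers verbatim to the compact-$Y$ setting.
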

\begin{proof} Let $k\in K.$ We show that for all $\e>0$ there exists $u\in  U$ such that
$$\sup_{y\in Y}|f(u,y)-f(k,y)|< \e.$$
In other words, $\|f(u,\cdot)-f(k,\cdot)\|< \e$ in the uniform norm of $B (Y).$ This means that $\{f(u,\cdot)\}_{u\in  U}$  is dense in $\{f(x,\cdot)\}_{x\in K}.$

Let $\e>0$ and $y_0\in Y$ be fixed. Since the mapping $x\To f(x,y_0)$ is continuous there exists an open neighbourhood $V$ of $k$ such that for all $x\in V$ one has $$|f(x,y_0)-f(k,y_0)|<\frac{\e}{8}.$$
Since $U$ is dense in $K$, there exists $u_0\in V\cap U$ such that
\begin{equation}\label{e01}
|f(u_0,y_0)-f(k,y_0)|<\frac{\e}{8}.
\end{equation}
On the other hand the mapping $y\To f(u_0,y)$ is continuous at $y_0$, hence there exists $V_0$ an open convex neighbourhood of $y_0$ such that for all $y\in V_0$ one has
\begin{equation}\label{e02}
|f(u_0,y)-f(u_0,y_0)|<\frac{\e}{8}.
\end{equation}
It can easily be observed, that $(\ref{e01})$ and $(\ref{e02})$ lead to
\begin{equation}\label{e03}
|f(u_0,y)-f(k,y_0)|<\frac{\e}{4},\,\forall y\in V_0.
\end{equation}
Finally, the mapping $y\To f(k,y)$ is continuous at $y_0$, hence there exists $V'_0$ an open convex neighbourhood of $y_0$ such that for all $y\in V'_0$ one has
\begin{equation}\label{e04}
|f(k,y)-f(k,y_0)|<\frac{\e}{4}.
\end{equation}

From $(\ref{e03})$ and $(\ref{e04})$ we obtain, that for all $y\in V(y_0)=V_0\cap V'_0$ one has
\begin{equation}\label{e05}
|f(u_0,y)-f(k,y)|<\frac{\e}{2}.
\end{equation}
In other words, for every $y_0\in Y$ there exist $V(y_0)$, an open and convex neighbourhood of $y_0,$ and $u_0\in U$ such that
$$\sup_{y\in V(y_0)}|f(u_0,y)-f(k,y)|\le\frac{\e}{2}.$$

Obviously $\cup_{y_0\in Y}V(y_0)$ is an open cover of the set $Y$.

Note that for every $y_0\in Y$ there exists $V(y_0)$ such that $y_0\in V(y_0),$  hence
$$\inf_{u\in U}|f(u,y_0)-f(k,y_0)|\le |f(u_0,y_0)-f(k,y_0)|\le\frac{\e}{2}.$$
Thus,
$$\sup_{y\in Y}\inf_{u\in U}|f(u,y)-f(k,y)|\le\frac{\e}{2}<\e.$$

The latter relation combined with (iii) leads to
$$\inf_{u\in U}\sup_{y\in Y}|f(u,y)-f(k,y)|<\e.$$ Consequently, there exists $u^*\in U$ such that
$$\sup_{y\in Y}|f(u^*,y)-f(k,y)|<\e.$$
\end{proof}

When $Y$ is compact the following result holds in $C (Y),$ where $C(Y)$ is the space of continuous functions on $Y$ endowed with the uniform norm.

\begin{theorem}\label{ta} Let $K$ be a  nonempty convex subset of the Hausdorff locally convex topological vector space $X$ and let $Y$ be a compact and symmetric subset of a locally convex topological vector space. Let $U\subseteq K$ be a self-segment-dense set in $K.$ Consider further the mapping $f:K\times Y\To\R,$ and assume  that the following assumptions are fulfilled.
\begin{itemize}
\item[(i)] The map $x\To f(x,y)$ is  convex on $U$ and continuous on $K$ for all $y\in Y$.
\item[(ii)] The map $y\To f(x,y)$ is affine and continuous, for all $x\in  K.$
\end{itemize}
Then, $\{f(u,\cdot)\}_{u\in  U}$  is dense in $\{f(x,\cdot)\}_{x\in K}\subseteq C (Y),$ where $ C (Y)$ is the space of continuous functions on $Y$ endowed with the topology of uniform norm.
\end{theorem}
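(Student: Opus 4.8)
The plan is to verify the defining property of denseness in the uniform norm of $C(Y)$: for every $k\in K$ and every $\varepsilon>0$ one must produce $u\in U$ with $\sup_{y\in Y}|f(u,y)-f(k,y)|<\varepsilon$. I would begin with the purely topological reduction already carried out in the proof of Theorem \ref{ta0}. Fixing $k$ and $\varepsilon$, for each $y_0\in Y$ continuity of $x\mapsto f(x,y_0)$ at $k$ together with density of $U$ gives $u_{y_0}\in U$ with $|f(u_{y_0},y_0)-f(k,y_0)|$ small, and then continuity of $y\mapsto f(u_{y_0},y)$ and of $y\mapsto f(k,y)$ at $y_0$ provides an open neighbourhood $V(y_0)$ of $y_0$ with $\sup_{y\in V(y_0)}|f(u_{y_0},y)-f(k,y)|<\varepsilon/4$. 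Compactness of $Y$ extracts a finite subcover $Y=V(y_1)\cup\dots\cup V(y_n)$; set $\mathcal S=\{u_{y_1},\dots,u_{y_n}\}\subseteq U$. Since $\mathcal S$ is finite, $\co(\mathcal S)$ is a compact convex subset of $K$, and by Lemma \ref{l25} and Remark \ref{r28} the set $\co(\mathcal S)\cap U$ is self-segment-dense in $\co(\mathcal S)$, with $\co(\co(\mathcal S)\cap U)=\co(\mathcal S)$ trivially segment-dense in $\co(\mathcal S)$; so the self-segment-dense minimax results of Section 4 are available on the compact convex base $\co(\mathcal S)$.

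The core is a minimax step on $\co(\mathcal S)\times Y$. The bifunction $g(x,y):=f(x,y)-f(k,y)$ is, by (i), convex, lower semicontinuous and in fact continuous in $x$ on $\co(\mathcal S)$, and, by (ii), affine — hence concavelike — in $y$. Corollary \ref{c4} (equivalently, Theorem \ref{tm3} applied to the restriction of $f$ to $\co(\mathcal S)\times Y$) therefore yields the minimax equality $\inf_{x\in\co(\mathcal S)\cap U}\sup_{y\in Y}g(x,y)=\sup_{y\in Y}\inf_{x\in\co(\mathcal S)\cap U}g(x,y)$. Since every $y\in Y$ lies in some $V(y_j)$ and $u_{y_j}\in\co(\mathcal S)\cap U$, the right-hand side is at most $\varepsilon/4$, so there is $u^*\in\co(\mathcal S)\cap U\subseteq U$ with $f(u^*,y)-f(k,y)<\varepsilon/2$ for all $y\in Y$. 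Here the symmetry of $Y$ and the affineness in $y$ enter: for $y\in Y$ the point $-y$ also lies in $Y$ and, $g(u^*,\cdot)$ being affine, $g(u^*,y)+g(u^*,-y)=2\big(f(u^*,0)-f(k,0)\big)$; combining the upper bound at $y$ with the one at $-y$ gives the complementary estimate $g(u^*,y)>2\big(f(u^*,0)-f(k,0)\big)-\varepsilon/2$ for all $y\in Y$. Consequently $\sup_{y\in Y}|f(u^*,y)-f(k,y)|<\varepsilon$ provided the scalar $f(u^*,0)-f(k,0)$ is not too negative.

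I expect controlling this "even part'' to be the main obstacle: the map $x\mapsto\sup_{y\in Y}|f(x,y)-f(k,y)|$ equals $|f(x,0)-f(k,0)|+\sup_{y\in Y}\big(g(x,y)-(f(x,0)-f(k,0))\big)$ and is in general not convex on $U$ (the absolute value of the even part breaks convexity), so it cannot be fed directly into a convex minimax theorem. The route I would pursue is to enlarge $\mathcal S$ by one more point $u_0\in U$, chosen — using density of $U$ and continuity on $K$ of $x\mapsto f(x,0)=\tfrac12\big(f(x,y_0)+f(x,-y_0)\big)$ — so that $|f(u_0,0)-f(k,0)|$ is small, and then to run the minimax over $\co(\mathcal S\cup\{u_0\})$ against a suitable doubling of the second variable (a copy of $Y$ on which the bifunction is $g$ and a second copy on which it is $g(x,y)-2(f(x,0)-f(k,0))$), so that the inner supremum becomes exactly $\sup_{y\in Y}|f(x,y)-f(k,y)|$; the genuinely delicate point — and what the self-segment-denseness of $\co(\mathcal S\cup\{u_0\})\cap U$ (Lemma \ref{l26}) and Corollary \ref{c1} applied to the continuous even part are meant to overcome — is to keep this enlarged bifunction usable in $x$ and to force the minimaximizer $u^*$ to have $f(u^*,0)-f(k,0)$ controlled. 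Once that is in place, the estimate of the previous paragraph upgrades to $\sup_{y\in Y}|f(u^*,y)-f(k,y)|<\varepsilon$ with $u^*\in U$, which is the assertion; the routine items to postpone are the concavelike/affineness bookkeeping for the enlarged bifunction and the constant-chasing in the $\varepsilon$'s.
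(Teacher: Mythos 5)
Your first two paragraphs reproduce the paper's own proof: the covering argument borrowed from Theorem \ref{ta0}, the finite subcover $Y=\bigcup_i V(y_i)$, the passage to $\mathcal S=\{u_{y_1},\dots,u_{y_n}\}$, and the application of Corollary \ref{c4} to $g(x,y)=f(x,y)-f(k,y)$ on $\co(\mathcal S)\cap U$, yielding $u^*\in U$ with the one-sided bound $\sup_{y\in Y}\bigl(f(u^*,y)-f(k,y)\bigr)<\varepsilon/2$. Up to that point the proposal is correct and is exactly the paper's route (only the constants differ).

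The gap is that you never establish the two-sided bound, which is the actual assertion. Your third paragraph is a plan, not a proof: you yourself flag its ``genuinely delicate point'' as unresolved, and as described it does not obviously work. The doubled bifunction has inner supremum $\sup_{y\in Y}|g(x,y)|=\sup_{y\in Y}g(x,y)+2\max\{0,-g(x,0)\}$, which is not convex in $x$ (the positive part of the concave function $-g(\cdot,0)$ destroys convexity), so none of the Section~4 minimax theorems apply to it; and adjoining a single point $u_0$ with $|f(u_0,0)-f(k,0)|$ small does not control $f(u^*,0)-f(k,0)$ for the convex combination $u^*$ that the minimax delivers, since a convex function can dip below its vertex values inside the hull. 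The paper closes the argument instead in one line from the symmetry of $Y$ and the affineness in $y$: $\sup_{y\in Y}\bigl(f(k,y)-f(u^*,y)\bigr)=\sup_{y\in Y}\bigl(f(u^*,-y)-f(k,-y)\bigr)=\sup_{y\in Y}\bigl(f(u^*,y)-f(k,y)\bigr)<\varepsilon$. You should note, though, that your identity $g(u^*,y)+g(u^*,-y)=2\bigl(f(u^*,0)-f(k,0)\bigr)$ shows the first equality above amounts to saying that $y\To f(u^*,y)-f(k,y)$ is odd, which for an affine map forces $f(u^*,0)=f(k,0)$; so the difficulty you isolated is a genuine observation about the published proof as well (the step is airtight when $f$ is linear in $y$, as in the James'-theorem application, but needs this normalization in the general affine case). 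As submitted, however, your proof is incomplete: the estimate $\sup_{y\in Y}|f(u^*,y)-f(k,y)|<\varepsilon$ is never reached.
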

\begin{proof} Let $k\in K$ and let $\e>0$. As in the proof of Theorem \ref{ta0} one can show that  for every $y_0\in Y$ there exist $V(y_0)$, an open and convex neighbourhood of $y_0,$ and $u_0\in U$ such that
$$\sup_{y\in V(y_0)}|f(u_0,y)-f(k,y)|\le\frac{\e}{2}.$$

Obviously $\cup_{y_0\in Y}V(y_0)$ is an open cover of the compact set $Y$, hence it contains a finite subcover. In other words, there exist $y_1,...,y_n\in Y$ and $u_1,...,u_n\in U$ such that $Y=\cup_{i=1}^n V(y_i)$ and
$$\sup_{y\in V(y_i)}|f(u_i,y)-f(k,y)|\le\frac{\e}{2}\mbox{ for all }i\in\{1,2,...,n\}.$$

Note that for every $y_0\in Y$ there exists $V(y_j)$ such that $y_0\in V(y_j).$ Further, it is obvious that
$$\inf_{u\in\{u_1,...,u_n\}}|f(u,y_0)-f(k,y_0)|\le |f(u_j,y_0)-f(k,y_0)|\le\frac{\e}{2}$$
hence,
$$\sup_{y\in Y}\inf_{u\in\{u_1,...,u_n\}}|f(u,y)-f(k,y)|\le\frac{\e}{2}.$$

The latter relation leads to
$$\sup_{y\in Y}\inf_{u\in\co\{u_1,...,u_n\}\cap U}(f(u,y)-f(k,y))\le\frac{\e}{2}.$$

Obviously $\co\{u_1,...,u_n\}$ is compact and according to Lemma \ref{l25}, the intersection $\co\{u_1,...,u_n\}\cap U$ is self-segment-dense in $\co\{u_1,...,u_n\}.$ We show that Corollary \ref{c1} can be applied to the function $g:K\times Y\To\R,\, g(x,y)=f(x,y)-f(k,y).$
Indeed, the mapping $x\To g(x,y)$ is convex on $U$  and continuous on $K$ for all $y\in Y$ hence it is also convex on $\co\{u_1,...,u_n\}\cap U,$ (since  $U$ is self-segment-dense in $K$), and lower semicontinuous on $\co\{u_1,...,u_n\}$ for all $y\in Y.$

We show that the map $y\To g(x,y)$ is concave (hence also concavelike) for all $x\in \co\{u_1,...,u_n\}\cap U.$

Indeed, let $x\in \co\{u_1,...,u_n\}\cap U.$ According to the hypothesis of the theorem, the mapping $y\To f(x,y)$ is affine, hence for every $y_1,y_2\in Y$ and $t\in[0,1]$ one has
$$g(x,(1-t)y_1+ty_2)=f(x,(1-t)y_1+ty_2)-f(k,(1-t)y_1+ty_2)=$$
$$(1-t)(f(x,y_1)-f(k,y_1))+t(f(x,y_2)-f(k,y_2))=(1-t)g(x,y_1)+tg(x,y_2).$$

By applying Corollary \ref{c4}, we obtain
$$\inf_{u\in\co\{u_1,...,u_n\}\cap U}\sup_{y\in Y}(f(u,y)-f(k,y))=\sup_{y\in Y}\inf_{u\in\co\{u_1,...,u_n\}\cap U}(f(u,y)-f(k,y))\le$$
$$\le\frac{\e}{2}<\e.$$
Hence, there exists $u^*\in\co\{u_1,...,u_n\}\cap U$ such that
$$\sup_{y\in Y}(f(u^*,y)-f(k,y))< \e.$$

Conversely, since $Y$ is symmetric and $f$ is affine in the second variable we have
$$\sup_{y\in Y}(-f(u^*,y)+f(k,y))=\sup_{y\in Y}(f(u^*,-y)-f(k,-y))$$
and $$\sup_{y\in Y}(f(u^*,-y)-f(k,-y))=\sup_{y\in Y}(f(u^*,y)-f(k,y))< \e.$$
Hence, $$\sup_{y\in Y}|f(u^*,y)-f(k,y)|< \e.$$
\end{proof}

In the next result we drop the compactness assumption on $Y$, but we assume instead some minimax results.

\begin{theorem}\label{ta1} Let $K$ be a  nonempty convex subset of the Hausdorff locally convex topological vector space $X$ and let $Y$ be a closed convex bounded and symmetric subset of a locally convex topological vector space. Let $U\subseteq K$ be a self-segment-dense set in $K.$ Consider further the bounded mapping $f:K\times Y\To\R,$ and assume  that the following assumptions are fulfilled.
\begin{itemize}
\item[(i)] The map $x\To f(x,y)$ is  convex on $U$ and continuous on $K$ for all $y\in Y$.
\item[(ii)] The map $y\To f(x,y)$ is affine and continuous on $Y$ for all $x\in  K.$
\item[(iii)] $\inf_{x\in  U}\sup_{y\in Y_0}f(x,y)=\sup_{y\in Y_0}\inf_{x\in U}f(x,y),$ for every closed convex subset $Y_0\subseteq Y.$
\end{itemize}
Then, $\{f(u,\cdot)\}_{u\in  U}$  is dense in $\{f(x,\cdot)\}_{x\in K}\subseteq B (Y),$ where $ B (Y)$ is the space of bounded functions on $Y$ endowed with the topology of uniform norm.
\end{theorem}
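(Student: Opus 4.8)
The plan is to imitate the proof of Theorem~\ref{ta}, the only structural change being that the place where compactness of $Y$ entered --- passing to a finite subcover $Y=\bigcup_{i=1}^{n}V(y_i)$ and then invoking Corollary~\ref{c4} on the compact convex hull $\co\{u_1,\dots,u_n\}$ --- is now taken over by the minimax hypothesis (iii). Fix $k\in K$ and $\e>0$; the goal is to produce $u^{\ast}\in U$ with $\sup_{y\in Y}|f(u^{\ast},y)-f(k,y)|<\e$, that is $\|f(u^{\ast},\cdot)-f(k,\cdot)\|<\e$ in $B(Y)$. I would introduce the auxiliary bifunction $g:K\times Y\To\R$, $g(x,y)=f(x,y)-f(k,y)$; since $f(k,\cdot)$ is bounded and independent of $x$, $g$ is bounded, $x\To g(x,y)$ is convex on $U$ and continuous on $K$ for every $y$, and $y\To g(x,y)$ is affine and continuous for every $x$.

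First I would record the easy half. For every $y\in Y$ the map $x\To f(x,y)$ is continuous on $K$ and $U$ is dense in $K$, so $f(k,y)$ is a limit of values $f(u,y)$ with $u\in U$; hence $\inf_{u\in U}g(u,y)\le0$, and therefore $\sup_{y\in Y}\inf_{u\in U}g(u,y)\le0$. (If one prefers to stay closer to the proofs of Theorems~\ref{ta0} and~\ref{ta}, one can instead run their local construction: for each $y_{0}\in Y$ one obtains an open convex neighbourhood $V(y_{0})$ of $y_{0}$ and $u(y_{0})\in U$ with $\sup_{y\in V(y_{0})}|g(u(y_{0}),y)|\le\e/2$, whence $\sup_{y\in Y}\inf_{u\in U}|g(u,y)|\le\e/2$ and a fortiori $\sup_{y\in Y}\inf_{u\in U}g(u,y)\le\e/2$.)

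The second and decisive step is the swap of infimum and supremum for $g$. Granting the minimax equality
\[
\inf_{u\in U}\sup_{y\in Y}g(u,y)=\sup_{y\in Y}\inf_{u\in U}g(u,y),
\]
the first step gives $\inf_{u\in U}\sup_{y\in Y}g(u,y)\le0<\e$, so there is $u^{\ast}\in U$ with $\sup_{y\in Y}\bigl(f(u^{\ast},y)-f(k,y)\bigr)<\e$. This one-sided estimate is then upgraded to a two-sided one exactly as at the end of the proof of Theorem~\ref{ta}: since $Y$ is symmetric and $y\To f(x,y)$ is affine, substituting $y\mapsto-y$ yields
\[
\sup_{y\in Y}\bigl(f(k,y)-f(u^{\ast},y)\bigr)
=\sup_{y\in Y}\bigl(f(u^{\ast},-y)-f(k,-y)\bigr)
=\sup_{y\in Y}\bigl(f(u^{\ast},y)-f(k,y)\bigr)<\e,
\]
so $\|f(u^{\ast},\cdot)-f(k,\cdot)\|=\sup_{y\in Y}|f(u^{\ast},y)-f(k,y)|<\e$; as $k\in K$ and $\e>0$ were arbitrary, $\{f(u,\cdot)\}_{u\in U}$ is dense in $\{f(x,\cdot)\}_{x\in K}$ in $B(Y)$.

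The step I expect to demand the most care is the minimax equality for $g$. Hypothesis (iii) is formulated for $f$, whereas I need it for $g=f-f(k,\cdot)$; since $g$ inherits from $f$ all the structural assumptions (convexity in $x$ on $U$, continuity in $x$ on $K$, affineness and continuity in $y$) and the admissible sets in (iii) are precisely the closed convex $Y_{0}\subseteq Y$, the natural route is to argue that the identity in (iii) is stable under subtracting the $x$-independent affine function $f(k,\cdot)$, or, to be fully self-contained, to re-establish it for $g$ on $Y_{0}=Y$ by whatever argument delivers (iii) for $f$. Intertwined with this is the familiar reason why the detour through the one-sided bifunction $g$ rather than $|g|$ is unavoidable: $u\To|g(u,y)|$ is not convex, so a convex--concave minimax theorem cannot be applied to it, and the two-sided conclusion must instead be recovered from the one-sided one through the symmetry of $Y$ and the affineness of $f$ in $y$ --- precisely the manoeuvre used at the end of the proof of Theorem~\ref{ta}.
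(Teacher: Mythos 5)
Your opening step (the observation that $\sup_{y\in Y}\inf_{u\in U}\bigl(f(u,y)-f(k,y)\bigr)\le 0$, by density of $U$ and continuity of $f(\cdot,y)$) and your closing step (upgrading the one-sided estimate to a two-sided one via the symmetry of $Y$ and the affineness of $f(x,\cdot)$) both coincide with the paper's proof. The gap sits exactly where you yourself flagged it: the minimax equality for $g=f(\cdot,\cdot)-f(k,\cdot)$ over all of $Y$. Hypothesis (iii) is an assumption on $f$, not a derived fact, so there is no ``argument delivering (iii)'' that you could rerun for $g$; and the identity in (iii) is \emph{not} stable under subtracting the $x$-independent term $f(k,\cdot)$: the two sides $\inf_{x\in U}\sup_{y\in Y_0}$ and $\sup_{y\in Y_0}\inf_{x\in U}$ are each perturbed nonlinearly by such a subtraction, and nothing in (i)--(iii) forces the duality gap of $g$ over $Y$ to vanish merely because that of $f$ does over every closed convex $Y_0$. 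Since this equality is the one step that produces $u^{*}$, your proof does not close as written.

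The paper's proof circumvents this by invoking (iii) only on pieces of $Y$ where $f(k,\cdot)$ is nearly constant, so that subtracting it is harmless there. With $M=\sup_{y\in Y}f(k,y)$, $m=\inf_{y\in Y}f(k,y)$ (finite since $f$ is bounded) and $n$ chosen so that $\frac{M-m}{n}<\e$, the level sets $Y_i=f(k,\cdot)^{-1}\bigl(\bigl[m+(i-1)\frac{M-m}{n},\,m+i\frac{M-m}{n}\bigr]\bigr)$ are closed and convex (here the affineness and continuity of $f(k,\cdot)$ are used) and cover $Y$; applying (iii) to $f$ on each $Y_i$ and using that the oscillation of $f(k,\cdot)$ on $Y_i$ is below $\e$ yields points $u_i\in U$ with $\sup_{y\in Y_i}\bigl(f(u_i,y)-f(k,y)\bigr)<\e$, hence $\sup_{y\in Y}\inf_{u\in\co\{u_1,\ldots,u_n\}\cap U}\,g(u,y)<\e$. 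The swap of infimum and supremum for $g$ is then carried out not over $(U,Y)$ but over $(\co\{u_1,\ldots,u_n\}\cap U,\,Y)$ by Corollary \ref{c4}, which applies because $\co\{u_1,\ldots,u_n\}$ is compact and, by Lemma \ref{l25}, $\co\{u_1,\ldots,u_n\}\cap U$ is self-segment-dense in it. Note that your argument never uses the self-segment-density of $U$ (nor Lemma \ref{l25} or Corollary \ref{c4}), which the paper's proof needs precisely at this last step and which Remark \ref{r51} indicates cannot simply be dropped; this is a further sign that the shortcut through a global minimax identity for $g$ cannot be made to work and that the level-set decomposition is the missing idea.
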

\begin{proof} Let $k\in K.$ Note at first  that $\inf_{u\in U}f(u,y)\le f(k,y)$ for all $y\in Y.$ Indeed, since $U$ is dense in $K$ for $y\in Y$ fixed, 
consider a net $(u_i)$ converging to $k.$ Then, from (i) we obtain
$\lim f(u_i,y)\to f(k,y)$, hence $\inf_{u\in U}f(u,y)\le\lim f(u_i,y)\le f(k,y).$

Let $\e>0$ be fixed. Assume that $\sup_{y\in Y}f(k,y)=M$ and $\inf_{y\in Y}f(k,y)=m$ and consider $n\in\N$ such that $\e>\frac{M-m}{n}.$ For every $i\in\{1,2,...,n\}$ consider the set $Y_i=f^{-1}\left(k, \left[m+(i-1)\frac{M-m}{n},m+i\frac{M-m}{n}\right]\right).$
Then from $(ii)$ the set $Y_i$ is closed and convex for all $i\in\{1,2,...,n\}$ and obviously $\cup_{i=1}^n Y_i=Y.$

We apply $(iii)$ for the sets $Y_i,\,i\in\{1,2,...,n\}.$
Hence, $$\inf_{x\in  U}\sup_{y\in Y_i}f(x,y)=\sup_{y\in Y_i}\inf_{x\in U}f(x,y),\mbox{ for all }i\in\{1,2,...,n\}.$$
We have $\sup_{y\in Y_i}\inf_{u\in U}f(u,y)\le \sup_{y\in Y_i}f(k,y)< \e+\inf_{y\in Y_i}f(k,y),\mbox{ for all }i\in\{1,2,...,n\},$ hence
$$\inf_{x\in  U}\sup_{y\in Y_i}f(x,y)< \e+\inf_{y\in Y_i}f(k,y),\mbox{ for all }i\in\{1,2,...,n\}.$$
But  then, for all $i\in\{1,2,...,n\}$ there exists $u_i\in U$ such that
$$\sup_{y\in Y_i}f(u_i,y)< \e+\inf_{y\in Y_i}f(k,y).$$
From the latter relation we get
$$\sup_{y\in Y_i}(f(u_i,y)-f(k,y))\le\sup_{y\in Y_i}f(u_i,y)-\inf_{y\in Y_i}f(k,y)<\e,\mbox{ for all }i\in\{1,2,...,n\}.$$
Obviously for every $y\in Y$ there exists $i\in\{1,2,...,n\}$ such that $y\in Y_i,$
hence
$$\sup_{y\in Y}\inf_{u\in\{u_1,u_2,...,u_n\}}(f(u,y)-f(k,y))<\e.$$
On the other hand $$\inf_{u\in\{u_1,u_2,...,u_n\}}(f(u,y)-f(k,y))\ge\inf_{u\in\co\{u_1,u_2,...,u_n\}\cap U}(f(u,y)-f(k,y)),$$ consequently
$$\sup_{y\in Y}\inf_{u\in\co\{u_1,u_2,...,u_n\}\cap U}(f(u,y)-f(k,y))<\e.$$
We show as in the proof of Theorem \ref{ta}, that Corollary \ref{c4} can be applied to the function $g:K\times Y\To\R,\, g(x,y)=f(x,y)-f(k,y).$

By applying Corollary \ref{c4}, we obtain
$$\inf_{u\in\co\{u_1,...,u_n\}\cap U}\sup_{y\in Y}(f(u,y)-f(k,y))=\sup_{y\in Y}\inf_{u\in\co\{u_1,...,u_n\}\cap U}(f(u,y)-f(k,y))<\e.$$
Hence, there exists $u^*\in\co\{u_1,...,u_n\}\cap U$ such that
$$\sup_{y\in Y}(f(u^*,y)-f(k,y))< \e.$$

Conversely, since $Y$ is symmetric and $f$ is affine in the second variable we have
$$\sup_{y\in Y}(-f(u^*,y)+f(k,y))=\sup_{y\in Y}(f(u^*,-y)-f(k,-y))$$
and $$\sup_{y\in Y}(f(u^*,-y)-f(k,-y))=\sup_{y\in Y}(f(u^*,y)-f(k,y))< \e.$$
Hence, $$\sup_{y\in Y}|f(u^*,y)-f(k,y)|< \e.$$
\end{proof}


\begin{remark}\label{r51}\rm  We would like to emphasize that  the self-segment-dense property of $U$ in the hypotheses of Theorem \ref{ta} and  Theorem \ref{ta1} is essential  and cannot be replaced by its denseness. Indeed, let  $X$ be an infinite dimensional real Hilbert space. Let $K=Y=\left\{ x\in X:\left\Vert x\right\Vert \le 1\right\}$ be the unit ball of $X$ and let $U=\left\{ x\in X:\left\Vert x\right\Vert =1\right\}$. Then according to Example \ref{ex1}, $U$ is dense in $K$ with respect to the weak topology of $X,$ but is not self-segment-dense in $K.$ Obviously $K=Y$  is weakly compact. Consider the function $$f:K\times Y\To\R,\,f(x,y)=\<x,y\>.$$ Then, it can easily be verified that the conditions (i) and (ii) in the hypotheses of Theorem \ref{ta} and Theorem \ref{ta1} are fulfilled.
Observe further, that $(iii)$ in the hypothesis of Theorem \ref{ta1} also holds, since  for every weakly closed convex subset $Y_0\subseteq Y$ Theorem \ref{fan}, can be applied for the function $f(x,y)=\<x,y\>.$

Now, the family $\{f(u,\cdot)\}_{u\in  U}$  is not dense in $\{f(x,\cdot)\}_{x\in K}\subseteq C (Y),$ since for $k=0\in K$ one has
$$\sup_{y\in Y}|f(u,y)-f(k,y)|=\sup_{y\in Y}|\<u,y\>|=1,\mbox{ for all }u\in U.$$
\end{remark}
\begin{corollary}\label{c5}  Let $K$ be a  nonempty, compact and convex subset of the Hausdorff locally convex topological vector space $X$ and let $Y$ be a closed convex bounded and symmetric subset of a locally convex topological vector space. Let $U\subseteq K$ be a self-segment-dense set in $K$ and suppose that $\co(U)$ is segment-dense in $K.$ Consider further the bounded mapping $f:K\times Y\To\R,$ and assume  that the following assumptions are fulfilled.
\begin{itemize}
\item[(i)] The map $x\To f(x,y)$ is  convex and continuous on $K$ for all $y\in Y$.
\item[(ii)] The map $y\To f(x,y)$ is affine and continuous on $Y$ for all $x\in  K.$
\end{itemize}
Then, $\{f(u,\cdot)\}_{u\in  U}$  is dense in $\{f(x,\cdot)\}_{x\in K}\subseteq B (Y),$ where $ B (Y)$ is the space of bounded functions on $Y$ endowed with the topology of uniform norm.
\end{corollary}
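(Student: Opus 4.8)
The plan is to deduce the statement from Theorem \ref{ta1}. Comparing hypotheses, everything assumed in Corollary \ref{c5} matches the corresponding assumption of Theorem \ref{ta1}: here $K$ is in particular convex, $Y$ is closed convex bounded and symmetric, $U$ is self-segment-dense in $K$, $f$ is bounded, condition (i) here (convex and continuous on $K$) is stronger than condition (i) there (convex on $U$, continuous on $K$), and condition (ii) is literally the same. The only thing left to supply is condition (iii) of Theorem \ref{ta1}, namely the minimax identity $\inf_{x\in U}\sup_{y\in Y_0}f(x,y)=\sup_{y\in Y_0}\inf_{x\in U}f(x,y)$ for every closed convex subset $Y_0\subseteq Y$. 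So the whole proof reduces to establishing this identity, and this is the one substantive step.

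To verify it, fix a nonempty closed convex set $Y_0\subseteq Y$ (the empty case being trivial) and apply Theorem \ref{tm2} to the restricted bifunction $f|_{K\times Y_0}$. The assumptions on the underlying sets transfer unchanged: $K$ is nonempty, compact and convex, $U\subseteq K$ is self-segment-dense, and $\co(U)$ is segment-dense in $K$. For each $y\in Y_0$, the map $x\mapsto f(x,y)$ is convex and continuous on the compact set $K$, hence real-valued, proper, convex and lower semicontinuous there, which is assumption (i) of Theorem \ref{tm2}. For each $x\in K$, the map $y\mapsto f(x,y)$ is affine on $Y$, hence affine — in particular concave, hence concavelike — on the convex set $Y_0$: taking $y_3=(1-t)y_1+ty_2\in Y_0$ gives the required inequality with equality. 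This is assumption (ii) of Theorem \ref{tm2}. Therefore Theorem \ref{tm2} yields $\inf_{x\in U}\sup_{y\in Y_0}f(x,y)=\sup_{y\in Y_0}\inf_{x\in U}f(x,y)$, and since $Y_0$ was an arbitrary closed convex subset of $Y$, condition (iii) of Theorem \ref{ta1} holds.

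With (iii) in hand, all hypotheses of Theorem \ref{ta1} are satisfied, and I would invoke it directly to conclude that $\{f(u,\cdot)\}_{u\in U}$ is dense in $\{f(x,\cdot)\}_{x\in K}$ in the uniform norm of $B(Y)$. There is no genuine obstacle in this argument; the only points requiring a moment's care are that a continuous real-valued function on a compact set is automatically proper and lower semicontinuous, and that the restriction of an affine function to a convex subset remains affine (hence concavelike), so that Theorem \ref{tm2} is legitimately applicable to each $Y_0$. One could additionally note, via Remark \ref{r313}, that the segment-denseness of $\co(U)$ is automatic in several concrete situations (e.g. $\co(U)=K$, $\co(U)$ with nonempty interior, or $X$ finite dimensional), but for the statement as given it is an explicit hypothesis.
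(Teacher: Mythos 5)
Your proof is correct and follows essentially the same route as the paper: reduce Corollary \ref{c5} to Theorem \ref{ta1} by verifying its hypothesis (iii) by means of one of the minimax theorems of Section 4. The only difference is that the paper cites Theorem \ref{tm3} for this step, whereas you apply Theorem \ref{tm2} to each restriction $f|_{K\times Y_0}$; since the corollary assumes convexity and continuity of $x\To f(x,y)$ on all of $K$ (so properness and lower semicontinuity are automatic, and affineness of $y\To f(x,y)$ gives concavelikeness on each convex $Y_0$), Theorem \ref{tm2} applies directly and your variant even spares the verification of the extra domain condition in Theorem \ref{tm3}.
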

\begin{proof} The conclusion follows via Theorem \ref{tm3}, which assures that the condition (iii) in Theorem \ref{ta1} is satisfied.
\end{proof}

\begin{corollary}{\rm(James' Theorem)} Let $X$ be a Banach space and let $X^*$ be the dual of $X.$ Then $X$ is reflexive if and only if every $x^*\in X^*$ attains its norm on the close unit ball of $X$.
\end{corollary}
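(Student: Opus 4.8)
The plan is to prove the two implications separately. The forward implication is routine: if $X$ is reflexive, the closed unit ball $B=\{x\in X:\|x\|\le1\}$ is weakly compact, so for every $x^*\in X^*$ the weakly continuous functional $x\mapsto x^*(x)$ attains its supremum over $B$, and this supremum equals $\|x^*\|$; hence the norm is attained. For the converse I would proceed through Theorem~\ref{ta1}.

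So assume every $x^*\in X^*$ attains its norm on $B$; the goal is to show that the canonical isometry $J\colon X\To X^{**}$ is onto. Write $B^{**}=\{x^{**}\in X^{**}:\|x^{**}\|\le1\}$. I would apply Theorem~\ref{ta1} with $K=B^{**}$ carrying the weak$^*$ topology of $X^{**}$ (compact and convex, by Banach--Alaoglu), with $U=J(B)$, which is self-segment-dense in $K$ because it is weak$^*$-dense there (Goldstine) and convex, with $Y=\{x^*\in X^*:\|x^*\|\le1\}$ carrying the \emph{norm} topology of $X^*$ (a closed, convex, bounded, symmetric subset of the normed space $X^*$), and with $f\colon K\times Y\To\R$, $f(x^{**},x^*)=\langle x^{**},x^*\rangle$. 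Conditions (i), (ii) and the boundedness of $f$ are immediate: $|f|\le1$; for fixed $x^*$ the map $x^{**}\mapsto\langle x^{**},x^*\rangle$ is linear and weak$^*$-continuous on $K$; for fixed $x^{**}$ the map $x^*\mapsto\langle x^{**},x^*\rangle$ is linear and norm-continuous on $Y$ --- here it is essential that $Y$ carries the norm topology, so that continuity holds for \emph{every} $x^{**}\in X^{**}$, not only for those in $J(X)$. Granting condition (iii), Theorem~\ref{ta1} gives that $\{f(u,\cdot):u\in J(B)\}$ is dense in $\{f(x^{**},\cdot):x^{**}\in B^{**}\}$ for the uniform norm of $B(Y)$; but $x^{**}\mapsto f(x^{**},\cdot)$ is a linear isometry of $(X^{**},\|\cdot\|)$ into $B(Y)$ (its norm on $Y$ is $\sup_{\|x^*\|\le1}|\langle x^{**},x^*\rangle|=\|x^{**}\|$), so this says precisely that $J(B)$ is norm-dense in $B^{**}$. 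Since $J(B)$ is norm-closed in $X^{**}$ ($J$ is an isometry and $B$ complete), it follows that $J(B)=B^{**}$, hence $J$ is surjective and $X$ is reflexive.

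Everything therefore reduces to verifying condition (iii): for every norm-closed convex $Y_0\subseteq Y$,
$$\inf_{x\in B}\sup_{x^*\in Y_0}x^*(x)=\sup_{x^*\in Y_0}\inf_{x\in B}x^*(x).$$
The right-hand side equals $-\inf_{x^*\in Y_0}\|x^*\|=-d(0,Y_0)$ and the inequality ``$\ge$'' always holds, so (iii) asserts that for each $\e>0$ there is $x\in B$ with $\langle x^*,x\rangle\le -d(0,Y_0)+\e$ for all $x^*\in Y_0$. Applying Theorem~\ref{fan} to $(x^{**},x^*)\mapsto-\langle x^{**},x^*\rangle$ with the weak$^*$-compact convex set $B^{**}$ as the compact variable already gives $\max_{x^{**}\in B^{**}}\inf_{x^*\in Y_0}\langle x^{**},x^*\rangle=d(0,Y_0)$, i.e.\ there is a bidual witness $x^{**}_0\in B^{**}$ with $\langle x^{**}_0,x^*\rangle\ge d(0,Y_0)$ for all $x^*\in Y_0$; thus (iii) is equivalent to approximating $-x^{**}_0$, uniformly over $Y_0$ and to within $\e$, by an element $J(x)$ with $x\in B$. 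This uniform-transfer step from the bidual down into $B$ is the heart of the matter --- it is exactly the point where the hypothesis that every $x^*\in X^*$ attains its norm on $B$ has to be invoked, via a James-type selection argument (a Simons-inequality-type device), and for a non-reflexive $X$ it provably fails. The same failure explains why one cannot substitute Corollary~\ref{c5} for Theorem~\ref{ta1} here: that would require $\co(U)=J(B)$ to be \emph{segment}-dense in $B^{**}$ for the weak$^*$ topology, yet a segment joining a point of $B^{**}\setminus J(X)$ to a point of $J(B)$ meets $J(B)$ in that single point only, so such segment-density would already presuppose reflexivity.
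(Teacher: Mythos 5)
Your route is the same as the paper's: forward direction via weak compactness and Weierstrass; converse via Goldstine (so $U=J(B)$ is weak$^*$-dense, hence, being convex, self-segment-dense in $B^{**}$), then Theorem~\ref{ta1} applied to $f(x^{**},x^*)=\langle x^{**},x^*\rangle$, followed by the identification of the uniform norm over $B^*$ with the bidual norm and the norm-closedness of $J(B)$ to conclude $J(B)=B^{**}$. Your added care about putting the norm topology on $Y=B^*$ (so that (ii) holds for \emph{every} $x^{**}\in X^{**}$) and your remark that Corollary~\ref{c5} cannot be used because $J(B)$ is not weak$^*$-segment-dense in $B^{**}$ are both correct and slightly sharper than what the paper writes down.

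The gap is that you never actually establish condition (iii), which you correctly identify as the whole content of the converse: the minimax equality $\inf_{x\in B}\sup_{x^*\in Y_0}\langle x^*,x\rangle=\sup_{x^*\in Y_0}\inf_{x\in B}\langle x^*,x\rangle$ for every norm-closed convex $Y_0\subseteq B^*$. You gesture at ``a James-type selection argument (a Simons-inequality-type device)'' but do not carry it out, so the one step where the hypothesis that every $x^*$ attains its norm on $B$ is consumed remains unproved. The paper closes exactly this hole by invoking Simons' minimax theorem \cite{simons1,S1}: since $\sup_{x\in B}\langle x^*,x\rangle=\|x^*\|$ is attained for every $x^*$, Simons' theorem yields the minimax equality for bounded convex subsets of $X^*$, hence for every closed convex $Y_0\subseteq B^*$. (You are right, incidentally, that the paper's displayed verification only treats $Y_0=B^*$ while Theorem~\ref{ta1} needs all closed convex subsets; the general case is still covered by Simons' theorem, but your reading of what must be checked is the accurate one.) To make your proof complete, replace the appeal to an unspecified ``selection argument'' by an explicit citation of, or proof of, Simons' minimax (or sup--limsup) theorem.
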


\begin{proof}
The implication $"\Rightarrow"$ is straightforward. Indeed, if $X$ is reflexive, then $B$ is weakly compact, hence by  Weierstrass theorem
$\sup_{x\in B}\<x^*,x\>$ is attained.

 Moreover, for the converse implication it is enough to assume that for all $x^*\in B^*=\{x^*\in X^*:\|x^*\|\le 1\},$ there exists $x_0\in B$ such that $\|x^*\|=\<x^*,x_0\>,$ where $B^*$ is the closed unit ball of $X^*.$ Indeed, for $x^*\in X^*\setminus B^*$ one has $\frac{x^*}{\|x^*\|}\in B^*,$ hence $\left\<\frac{x^*}{\|x^*\|},x_0\right\>=\left\|\frac{x^*}{\|x^*\|}\right\|=1,$ for some $x_0\in B.$

 Let $B^{**}$ be the unit ball of $X^{**}$ and  consider $U=\hat{B}\subseteq X^{**},$ where $\hat{B}$ is the canonical embedding of $B$ in $X^{**}.$ Then, by Goldstine theorem $U$ is dense in $B^{**}$ and  since is convex, $U$ is self-segment-dense in $B^{**}.$
Consider the bifunction $$f:B^{**}\times B^*\To \R,\, f(x^{**},x^*)=\<x^{**},x^{*}\>.$$ From Simons minimax theorem \cite{simons1,S1} it follows that
$$\inf_{u\in U}\sup_{x^{*}\in B^*} f(u,x^*)=\sup_{x^{*}\in B^*}\inf_{u\in U}f(u,x^*).$$
Hence, by Theorem \ref{ta1} one has that $\{f(u,\cdot)\}_{u\in U}$  is dense in $\{f(x^{**},\cdot)\}_{x^{**}\in B^{**}}\subseteq C(B^*),$ where $ C(B^*)$ is endowed with the topology of uniform norm. In other words, for every $\e>0$ and $x^{**}\in B^{**}$ there exists a $u\in U$ such that
$$\sup_{x^*\in B^*}|f(u,x^*)-f(x^{**},x^*)|<\e.$$
Equivalently,
$$\sup_{x^*\in B^*}|\<u-x^{**},x^{*}\>|=\|u-x^{**}\|<\e.$$
Hence $U$ is dense $B^{**}$ in the strong topology of $X^{**}$. But $\cl U= U$ in the strong topology, hence $U=B^{**}.$ But this implies that $J(X)=X^{**}$, where $J(X)$ is the canonical embedding  of $X$ in $X^{**}$, that is, $X$ is reflexive.
\end{proof}

{\bf Acknowledgements}
 The author would like to thank prof.dr. Radu Ioan Bo\c t, for his valuable suggestions concerning Proposition \ref{p312}.

\end{document}